\theoremstyle{plain}
\newtheorem{proposition}{Proposition}[section]
\newtheorem{lemma}[proposition]{Lemma}
\newtheorem{theorem}[proposition]{Theorem}
\numberwithin{equation}{section}
\DeclareMathOperator{\Arcsin}{Arcsin}
\newcommand{\CP}{\mathbb{CP}}
\newcommand{\real}{\mathbb{R}}
\newcommand{\comp}{\mathbb{C}}
\newcommand{\intg}{\mathbb{Z}}
\newcommand{\mcal}[1]{{\mathcal{#1}}}
\newcommand{\half}{\frac{1}{2}}
\newcommand{\delt}{\partial_t}
\newcommand{\knp}{\owedge}
\newcommand{\Szero}{\CP^1\times\CP^1}
\newcommand{\Sone}{\CP^2\#\overline{\CP}^2}
\newcommand{\opint}{(-T, T)}
\newcommand{\Rm}{R_m}
\DeclareMathOperator{\Ric}{Ric}
\DeclareMathOperator{\tsRic}{\Ric^{\circ}}
\DeclareMathOperator{\Hess}{Hess}
\DeclareMathOperator{\Vol}{Vol}
\DeclareMathOperator{\dVol}{dV}
\DeclareMathOperator{\Met}{Met}
\DeclareMathOperator{\trace}{trace}
\DeclareMathOperator{\id}{id}
\DeclareMathOperator{\SU}{SU}
\DeclareMathOperator{\U}{U}
\DeclareMathOperator{\GL}{GL}
\DeclareMathOperator{\pr}{pr}
\DeclareMathOperator{\Ker}{Ker}
\newcommand{\cd}{\nabla}
\newcommand{\lact}{\curvearrowright}\newcommand{\ract}{\curvearrowleft}
\title[]{Constant scalar curvature metrics \\on Hirzebruch surfaces}
\email{otoba@math.keio.ac.jp}
\date{\today}
\author[]{Nobuhiko Otoba}
\address{Keio University, 3-14-1 Hiyoshi, Kohoku-ku, Yokohama, Kanagawa 223-8522, Japan}
\begin{document}

\maketitle

\begin{abstract}
We construct smooth Riemannian metrics with constant scalar curvature on each Hirzebruch surface.  
These metrics respect 
the complex structures, fiber bundle structures, and Lie group actions of cohomogeneity one 
on these manifolds.  
Our construction is reduced to an ordinary differential equation called Duffing equation.  
An ODE for Bach-flat metrics on Hirzebruch surfaces with large isometry group is also derived.  
\end{abstract}

\section{Introduction and main results}\label{Results}
For each integer $m\ge 0$, Hirzebruch introduces a simply-connected complex surface $\Sigma_m$, 
now called \textit{the $m$-th Hirzebruch surface} \cite{Hir}.  
The first two surfaces $\Sigma_0$ and $\Sigma_1$ 
are known to be biholomorphically equivalent to $\Szero$ and $\Sone$, respectively, 
the latter being the connected sum of two complex projective planes with usual and inverse orientations.  
On one hand, 
each $\Sigma_m$ of these surfaces has a structure of $\CP^1$ bundle over $\CP^1$.  
On the other hand, when $m\ge 1$, 
$\Sigma_m$ admits an effective action of the Lie group $\U(2)/\left( \intg/m\intg\right)$, with orbit space a compact interval of real numbers.  
Hirzebruch surfaces are thus both locally trivial fiber bundles and cohomogeneity-one manifolds for $m\ge 1$.  

After Page \cite{Pag} constructed an Einstein metric on $\Sone$, 
B\'erard-Bergery \cite{BB} not only translated the construction into mathematics, 
but also characterized and generalized Page metric from the perspective of cohomogeneity-one Riemannian geometry.  
Following his work, 
several geometric structures on Hirzebruch surfaces with high symmetry, 
such as Einstein-Weyl structures \cite{MPPS} and extremal K\"ahler metrics \cite{HS} to mention a few, 
were constructed.

In this paper, we look for critical metrics on each Hirzebruch surface $\Sigma_m$ under the following assumptions ($m\ge 1$): 
\begin{enumerate}[(I)]
\item The fiber bundle projection $\pi_m: \Sigma_m\to\CP^1$ is a Riemannian submersion onto $\CP^1$ equipped with a metric of area $\pi$.  
\item The action $\U(2)/\left( \intg/m\intg\right)\lact\Sigma_m$ of cohomogeneity one is by isometries.  
\end{enumerate}
Normalization of area in condition (I) is to exclude homothety.  
For consequences of these assumptions, see Proposition \ref{consequences}.  
Critical metrics here are meant to be smooth Riemannian metrics satisfying Euler-Lagrange equations of curvature functionals.  
We focus our attention to the following functionals, which might be summarized as linear and quadratic curvature functionals (cf. \cite[Chapter 4]{Bes3}).  
For their definitions, 
let $M$ be a $4$-dimensional closed manifold and $\Met(M)$ the space of all $C^{\infty}$ metrics on $M$.  
The normalized Einstein-Hilbert functional $E: \Met(M)\to\real$ is defined by 
\[
E(g)=\frac{\int_{M} R_g\dVol_g}{\sqrt{\int_M\dVol_g}}, 
\]
where $R_g$ and $\dVol_g$ stand for the scalar curvature and volume element of $g$; 
a metric  $g\in\Met(M)$ is critical with respect to $E$ if and only if $g$ is Einstein.  
We also consider Yamabe functional $Y$, the restriction of $E$ to a conformal class on $M$; 
its critical points are precisely metrics of constant scalar curvature (cf. \cite{LP}).  
By a quadratic curvature functional, we mean a linear combination $a\mcal{W}+b\rho+c\mcal{S}: \Met(M)\to\real$ 
with constant coefficients $a, b, c\in\real$ of the following three functionals $\mcal{W}$, $\rho$, and $\mcal{S}$ 
defined respectively by
\begin{align*}
\mcal{W}(g)=\int_M\vert W_g\rvert^2\dVol_g, 
&&\rho(g)=\int_M\vert \Ric_g\rvert^2\dVol_g, 
&&\text{and}
&&\mcal{S}(g)=\int_MR_g^2\dVol_g.  
\end{align*}
Here, $W_g$ and $\Ric_g$ stand for the Weyl tensor and Ricci tensor of $g$, 
and we emphasize that $\lvert W_g\rvert$ and $\lvert\Ric_g\rvert$ are their tensor norms with respect to $g$.  
As Gursky and Viaclovsky point out in view of Chern-Gauss-Bonnet formula \cite{GV}, 
insofar as we are concerned with critical points of linear and quadratic curvature functionals in $4$-dimensions, 
it suffices to consider in addition to $E$ and $Y$ the following particular linear combination $\mcal{B}_t: \Met(M)\to\real$ defined by 
\[
\mcal{B}_t(g)=\mcal{W}(g)+t\mcal{S}(g)=\int_M\lvert W_g\rvert^2\dVol_g+t\int_MR_g^2\dVol_g
\]
for each real number $t$.  
Critical metrics of $\mcal{B}_t$-functional are said to be $B^t$-flat, 
and they have a tensorial characterization in terms of Ricci tensor and Bach tensor.  
In short terms, our objects to study are constant scalar curvature metrics, $B^t$-flat metrics, 
and Einstein metrics on Hirzebruch surfaces satisfying conditions (I) and (II).  

Our main results are the following.  
\begin{theorem}
\label{cscExistence}
For each integer $m\ge 1$ and each real number $R$, 
there exists a constant scalar curvature metric $g_m(R)$ on the $m$-th Hirzebruch surface $\Sigma_m$ 
satisfying conditions \textup{(I)} and \textup{(II)}.  
Scalar curvature of $g_m(R)$ is equal to the constant $R$ we have given.  
\end{theorem}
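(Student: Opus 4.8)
The plan is to put every metric satisfying (I) and (II) into a normal form depending on a single profile function, to recognize the constant-scalar-curvature condition as a conservative Duffing equation, and then to solve it by a soft phase-plane argument so that the smooth-closure conditions hold automatically. First I would invoke Proposition~\ref{consequences} to fix coordinates: writing $t$ for the coordinate on the one-dimensional orbit space, the fibre $\CP^1$ of $\pi_m$ is swept out by $t$ together with a collapsing circle direction $\psi$, while the base $\CP^1$ carries a fixed metric. Condition (I) forces the horizontal part of $g$ to be $t$-independent, so the base is rescaled by a constant, and the transitive $\U(2)$-action on the base in (II) forces that base metric to be round; normalizing its area to $\pi$ fixes its Gauss curvature $\kappa$. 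Condition (II) then reduces the vertical geometry to one unknown positive function $f$, giving
\[
g \;=\; dt^2 + f(t)^2\,\theta^2 + h^2\,g_0 ,
\]
where $g_0$ is the round metric of area $\pi$, $h$ is the constant determined by (I), and $\theta=d\psi+\omega$ is a connection one-form whose curvature $d\theta$ is the multiple $\mu$ of the area form of the base fixed by the requirement that the total space be the $m$-th Hirzebruch surface; in particular $\mu$ is a nonzero constant determined by $m$ and $h$.

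Next I would compute the scalar curvature of this metric with a moving orthonormal coframe adapted to the circle-bundle structure, which after cancellation of the base connection terms yields
\[
R_g \;=\; -\frac{2f''}{f} \;-\; \frac{\mu^2}{2}\,f^2 \;+\; 2\kappa .
\]
Imposing $R_g\equiv R$ and clearing the factor $-f/2$ turns this into the conservative Duffing equation
\[
f'' + \Bigl(\tfrac{R}{2}-\kappa\Bigr) f + \frac{\mu^2}{4}\,f^3 \;=\; 0 ,
\]
an equation of the form $f''+\alpha f+\beta f^3=0$ with $\alpha=\tfrac{R}{2}-\kappa$ free (through $R$) and $\beta=\tfrac{\mu^2}{4}>0$ fixed by $m$. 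Smooth closure of $g$ at the two singular orbits, where $\theta$ collapses to give the zero and infinity sections, requires $f(0)=f(T)=0$ together with $f'(0)=1$ and $f'(T)=-1$ once the period of $\psi$ is normalized to $2\pi$; the twisting $\mu$ pins down the diffeomorphism type as $\Sigma_m$.

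The ODE step is then soft. The conserved energy $\tfrac12(f')^2+\tfrac12\bigl(\tfrac{R}{2}-\kappa\bigr)f^2+\tfrac{\mu^2}{16}f^4\equiv\tfrac12$ has value $\tfrac12$ on the initial data $(f,f')=(0,1)$. Because $\beta>0$, the quartic potential is confining and the level set $\{E=\tfrac12\}$ is a single closed phase curve meeting $\{f=0\}$ transversally; integrating the initial value problem $f(0)=0,\,f'(0)=1$ therefore reaches a first return $f(T)=0$ at finite $T$, with $f>0$ on $(0,T)$, and energy conservation forces $f'(T)=-1$, so the second closure condition is met for free. Crucially this holds for every $\alpha\in\real$, hence every $R\in\real$: even when $\alpha<0$, the potential has only a barrier of height $0$ at the origin, which the positive energy $\tfrac12$ surmounts. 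Feeding the resulting analytic $f$ back into the ansatz produces the smooth metric $g_m(R)$ on $\Sigma_m$ of scalar curvature $R$. I expect the genuine obstacle to be the geometric bookkeeping rather than the analysis: verifying that the boundary data and the chosen $\mu$ assemble into a metric that extends smoothly across both singular orbits and realizes precisely $\Sigma_m$, and checking that the normalizations coming from (I), (II), the period of $\psi$, and the Chern-class integrality of $\theta$ are mutually consistent.
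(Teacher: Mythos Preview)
Your approach is essentially the paper's: reduce via the cohomogeneity-one normal form to a single profile function, recognize the constant-scalar-curvature condition as a conservative Duffing equation, and close up the boundary value problem by the first integral/phase-plane argument (the paper phrases this as ``similar to the arguments for defining trigonometric functions and Jacobian elliptic functions''). The only point you should make explicit is that smooth extension across the singular orbits requires not just $f=0$, $|f'|=1$ but the vanishing of \emph{all} even-order derivatives $f^{(2l)}$ there; this follows immediately because the Duffing right-hand side is odd in $f$, so $f(T)=0$ forces $f''(T)=0$ and inductively $f^{(2l)}(T)=0$.
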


\begin{theorem}
\label{nonExistence}
Let $g$ be a Riemannian metric on $\Sigma_m$ satisfying conditions \textup{(I)} and \textup{(II)}, 
and assume $g$ is critical with respect to a linear or quadratic curvature functional.  
Then, either $g$ is Bach flat, or $g$ coincides with a metric of Theorem \ref{cscExistence}.  
These two cases are mutually exclusive.  
\end{theorem}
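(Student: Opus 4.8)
The plan is to combine the principle of symmetric criticality with a reduction of the relevant Euler--Lagrange equations to ordinary differential equations, and then to isolate the constant scalar curvature branch by a single trace computation.

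First I would record, using Proposition \ref{consequences}, that a metric $g$ satisfying (I) and (II) is completely determined by one profile function on the orbit interval, the remaining data being fixed by $m$ and by the area normalization. All curvature quantities entering the functionals --- the scalar curvature $R_g$, the Ricci tensor $\Ric_g$, the Weyl tensor $W_g$, the Bach tensor $B_g$, and the Euler--Lagrange tensor of $\mcal{S}$ --- then become explicit expressions in this profile and its derivatives, so that every functional restricts to a one-variable variational problem. By the principle of symmetric criticality, $g$ is critical for a functional on $\Met(\Sigma_m)$ if and only if it is critical within this symmetric class, so criticality is equivalent to an ODE in the profile. Invoking the Gursky--Viaclovsky reduction \cite{GV} quoted above, I may assume from the outset that $g$ is critical for one of $E$, $Y$, or $\mcal{B}_t$ (allowing the limiting functional $\mcal{S}$).

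The dichotomy itself I would extract from a trace identity. For $E$, criticality means $g$ is Einstein, hence Bach flat. For $Y$, criticality means precisely that $R_g$ is constant; since under (I) and (II) such metrics are exactly the solutions of the Duffing equation satisfying the smooth-closure boundary conditions at the two singular orbits, $g$ coincides with one of the metrics $g_m(R)$ of Theorem \ref{cscExistence}. For $\mcal{B}_t$ I would take the trace of the $B^t$-flat equation: the Bach tensor is trace free, while the trace of the Euler--Lagrange tensor of $\mcal{S}$ is a nonzero constant multiple of $\Delta R_g$, so the full trace reads $c\,t\,\Delta R_g=0$ with $c\ne 0$. When $t=0$ this is the Bach-flat equation $B_g=0$; when $t\ne 0$ it forces $\Delta R_g=0$, whence $R_g$ is constant on the closed manifold $\Sigma_m$ and $g$ is again one of the $g_m(R)$ (the case $\mcal{S}$ is identical, its trace again giving $\Delta R_g=0$). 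This shows every critical metric is either Bach flat or a constant scalar curvature metric of Theorem \ref{cscExistence}.

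It remains to prove mutual exclusivity, namely that the $g_m(R)$ are never Bach flat, and this is the crux. Here I would exploit that constant scalar curvature lowers the order: the Duffing equation expresses the second derivative of the profile, and hence all higher derivatives, in terms of the profile and its first derivative, so the a priori fourth-order tensor $B_g$ becomes, along the constant scalar curvature branch, a second-order expression that I would compute explicitly and show not to vanish. I expect the cleanest mechanism to be that this reduced Bach tensor is a nowhere-zero multiple of the trace-free Ricci tensor $\tsRic_g$; granting this, $B_g=0$ forces $\tsRic_g\equiv 0$, i.e. $g$ is Einstein, and one finishes by excluding Einstein metrics in the symmetric class. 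The latter follows because freezing the base by the Riemannian-submersion condition in (I) leaves a single profile function but three a priori distinct Ricci eigenvalues that the Einstein condition must force to coincide, an over-determined system with no smooth solution on $\Sigma_m$ meeting the boundary conditions --- in particular this is why Page's Einstein metric, whose base factor has nonconstant size, fails (I). The step I expect to be the main obstacle is precisely this computation: evaluating the reduced Bach tensor on Duffing solutions and proving its non-vanishing through the asymptotics where the fiber circle degenerates, the trace identity $c\,t\,\Delta R_g=0$ cleanly separating the two branches but leaving open that they do not accidentally meet.
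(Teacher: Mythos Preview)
Your dichotomy argument is essentially the paper's, only slightly more elaborate: the paper does not invoke symmetric criticality or redo the trace computation, it simply cites the standard fact (\cite[4.H]{Bes3}, \cite{GV}) that a critical metric for a linear or quadratic curvature functional is either Bach flat or has constant scalar curvature, and then uses the uniqueness part of Lemma~\ref{Duffing} to identify the constant scalar curvature metrics in the symmetric class with the $g_m(R)$.

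The real gap is in your mutual exclusivity step. Your hoped-for mechanism---that on the Duffing branch the Bach tensor is a (pointwise) scalar multiple of $\tsRic$, so $B=0$ would force Einstein---is false for these metrics. With $R$ constant one can substitute $R'=R''=0$ and the Duffing relation $f''/f=-f^2+\beta$ (where $\beta=-(R-8)/2$) into \eqref{Ric} and \eqref{Bach1} to get, as polynomials in $f^2$,
\[
\tsRic_3=3f^2-\tfrac{\beta}{2}-2,\qquad \tsRic_4=f^2-\tfrac{\beta}{2}-2,
\]
while $24B_3$ and $24B_4$ have leading terms $540f^4$ and $108f^4$ respectively. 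Comparing the $f^6$--coefficients in $B_3\,\tsRic_4$ and $B_4\,\tsRic_3$ gives $540\neq 108\cdot 3=324$, so $B$ is \emph{not} proportional to $\tsRic$ even by a non-constant scalar; the reduction to ``no Einstein metric'' never gets off the ground. (The paper in fact uses the \emph{failure} of $B$ to be a constant multiple of $\tsRic$ later, in the proof that $g_m(R)$ is not $B^t$-flat.)

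The paper's route to mutual exclusivity is both shorter and purely algebraic. Set $R'=R''=0$ in \eqref{Bach1} and subtract:
\[
24\bigl(B_3-B_4\bigr)=88f^2R+432f^4-960f^2=f^2\bigl(432f^2+88R-960\bigr).
\]
If $B_3=B_4=0$ on $(-T,T)$ with $f>0$, this forces $f^2$ to be the constant $(960-88R)/432$, contradicting $f(\pm T)=0$. No asymptotics at the singular orbits and no Einstein discussion are needed; the explicit Bach components \eqref{Bach1} already rule out the overlap.
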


The constant scalar curvature metrics of Theorem \ref{cscExistence} are defined through an ordinary differential equation called Duffing equation, 
whose solutions are able to be analyzed in detail, 
whereas an ODE with movable essential singularities describes Bach-flat metrics on Hirzebruch surfaces 
with the $\U(2)/\left(\intg/m\intg\right)$-symmetry.  
Since a conformal deformation of Page metric \cite{Pag} satisfies conditions (I) and (II), 
the Bach-flat case of Theorem \ref{nonExistence} is nonempty.  
Other Bach-flat metrics are yet to be fully understood in this work.  

This paper is organized as follows.  
In Section \ref{Preliminaries}, we set up our situation to see that
all the necessary computations of curvature quantities 
are reduced to tensor calculations on the product manifold $S^3\times (-T, T)$.  
Taking it into account, in Section \ref{Tensors}, 
we calculate the scalar curvature and Bach tensor of certain Riemannian metrics on $S^3\times (-T, T)$.  
In Section \ref{CscMetrics}, 
proofs of Theorems \ref{cscExistence} and \ref{nonExistence} (\S \ref{MainProofs}) are 
followed by more detailed properties of the constant scalar curvature metrics of Theorem \ref{cscExistence} (\S \ref{FurtherProperties}).  
The Bach-flat equation mentioned previously is treated in Section \ref{BachFlat}.  


\section{Preliminaries}\label{Preliminaries}
The finite cyclic group $\intg/m\intg$ is written as $\Gamma_m$ in the sequel.  
In Sections \ref{bundle}, \ref{action}, and \ref{cKahler}, we see that 
the fiber bundle projections $\pi_m:\Sigma_m\to\CP^1$, 
the group actions $\U(2)/\Gamma_m\lact\Sigma_m$ 
and the conformal K\"ahler triplets $\left(J_m(f), g_m(f), \omega_m(f)\right)$ 
to be defined through a certain procedure have intimate relationships with each other.  
Thereafter, looking at particular open dense submanifolds of $\Sigma_m$ (Section \ref{odsub}), 
we show that every Riemannian metric satisfying conditions (I) and (II) has to be defined through this procedure (Section \ref{characterization}).  
Of our special attention is the first Hirzebruch surface $\Sigma_1$.  
Corresponding arguments for higher Hirzebruch surfaces $\Sigma_m$ require few modifications ($m\ge 2$).  

\subsection{Fiber bundle projections $\pi_m:\Sigma_m\to\CP^1$}\label{bundle}

Let $S^3=\{\left(z, w\right)\in\comp^2\mid\lvert z\rvert^2+\lvert w\rvert^2=1\}$ be the $3$-sphere.  
On $S^3$, the circle group 
$S^1=\{e^{i\theta}\in\comp\mid\theta\in\real\}$ 
acts freely on the right 
by componentwise multiplication $(z, w).e^{i\theta}=(ze^{i\theta}, we^{i\theta})$.  
The quotient map $p_1: S^3\to\CP^1$ onto its orbit space is a principal $S^1$ bundle called the Hopf fibration.  

We consider the $2$-sphere $S^2$ as the disjoint union of the cylinder $S^1\times(-T, T)$ and two points, 
where we introduce the usual differential structure by means of polar coordinates.  
The left $S^1$ action on $S^1\times(-T, T)$ defined by $e^{i\theta}.(e^{i\phi}, t)=(e^{i(\theta+\phi)}, t)$ 
extends to an effective smooth action $S^1\lact S^2$ fixing exactly these two points.  

Let $S^1$ act freely on $S^3\times S^2$ by $(\tilde{x}, \hat{x}).e^{i\theta}=(\tilde{x}.e^{i\theta}, e^{-i\theta}.\hat{x})$, 
with the corresponding principal $S^1$ bundle $q_1: S^3\times S^2\to\Sigma_1$.  
Since the projection $\pr_1:S^3\times S^2\to S^3$ onto the first factor is equivariant with respect to the $S^1$ actions, 
we have the following commutative diagram
\begin{equation}\label{principalmap}
\begin{split}
\xymatrix{
S^3\times S^2\ar[r]^-{\pr_1}\ar[d]_-{q_1}&S^3\ar[d]^-{p_1}\\
\Sigma_1\ar[r]^-{\pi_1}&\CP^1.  
}
\end{split}
\end{equation}
The associated bundle $\pi_1: \Sigma_1\to\CP^1$ is the first Hirzebruch surface as locally trivial smooth $S^2$ bundle over $\CP^1$.  

When $m\ge 2$, we denote by $\Gamma_m$ the subgroup 
$\{e^{i\frac{2\pi l}{m}}\mid l\in\intg\}$ 
of $S^1$.  
We identify the quotient group $S^1/\Gamma_m$ with $S^1$ 
so as to consider the quotient space $S^3/\Gamma_m$ to be a principal $S^1$ bundle over $\CP^1$.  
The $m$-th Hirzebruch surface as $S^2$ bundle over $\CP^1$ 
is the fiber bundle $\pi_m: \Sigma_m\to\CP^1$ 
associated with the lens space $S^3/\Gamma_m$ with respect to the same effective action $S^1\lact S^2$ 
as in the $m=1$ case.  

\subsection{Effective actions $\U(2)/\Gamma_m\lact\Sigma_m$}\label{action}
On $S^3$, the unitary group 
$\U(2)=\{A=\left(\begin{smallmatrix}a&b\\c&d\end{smallmatrix}\right)\in\GL(2, \comp)\mid A{}^t\!\bar{A}=I\}$ 
acts effectively and transitively on the left by matrix multiplication $A.(z, w)=(az+bw, cz+dw)$.  
Let $\U(2)$ act trivially on $S^2$ and consider the diagonal action $\U(2)\lact S^3\times S^2$.  
Since this action commutes with the action $S^3\times S^2\ract S^1$ of structure group, 
it descends to an effective action $\U(2)\lact\Sigma_1$.  

When $m\ge 2$, by a slight abuse of notation, we also denote by $\Gamma_m$ the subgroup 
\[
\bigg\{\left(\begin{smallmatrix}e^{i\frac{2\pi l}{m}}&0\\0&e^{i\frac{2\pi l}{m}}\end{smallmatrix}\right)\bigg\lvert\ l\in\intg\bigg\}
\]
of $\U(2)$.  
The action $\U(2)\lact S^3$ descends to an effective action $\U(2)/\Gamma_m\lact S^3/\Gamma_m$, 
and $\U(2)/\Gamma_m$ thus acts effectively on $\Sigma_m$.  
We see later in Section \ref{odsub} that these actions are of cohomogeneity one.  

\subsection{Conformal K\"ahler triplets $\left(J_m(f), g_m(f), \omega_m(f)\right)$ on $\Sigma_m$}\label{cKahler}
We define a global moving frame $V$, $X$, $Y$ on $S^3$ by 
\begin{align}\label{ijk}
V: \begin{pmatrix}z\\w\end{pmatrix}\mapsto \begin{pmatrix}iz\\iw\end{pmatrix}, &&
X: \begin{pmatrix}z\\w\end{pmatrix}\mapsto \begin{pmatrix}-\overline{w}\\\overline{z}\end{pmatrix}, &&
Y: \begin{pmatrix}z\\w\end{pmatrix}\mapsto \begin{pmatrix}-i\overline{w}\\i\overline{z}\end{pmatrix}
\end{align}
so that $V$ is a $\U(2)$-invariant fundamental vector field 
corresponding to the vector field $\partial_{\theta}: e^{i\theta}\mapsto ie^{i\theta}$ on $S^1$, 
while $X$ and $Y$ are $\SU(2)$-invariant vector fields, which are not $\U(2)$-invariant.  
The tangential distribution $\widetilde{\mcal{H}}_1$ spanned by $X$ and $Y$ is a $\U(2)$-invariant principal connection on $S^3$.  
We denote by $\mcal{H}_1$ the induced connection on $\Sigma_1$, 
that is to say, 
$\mcal{H}_1$ is the image of the product distribution $\widetilde{\mcal{H}}_1\times\{0\}$ on $S^3\times S^2$ 
under the quotient map $q_1: S^3\times S^2\to\Sigma_1$.  
We see that $\mcal{H}_1$ is invariant under the action $\U(2)\lact\Sigma_1$.  

In order to be precise, we recall the definition of conformal K\"ahler triplets.  
Firstly, by a compatible triplet, we mean a triplet $(J, g, \omega)$ of 
an almost complex structure $J$, a Riemannian metric $g$ and an almost symplectic form $\omega$ 
satisfying the following conditions
\begin{align*}
g(JE, JF)=g(E, F), &&\omega(JE, JF)=\omega(E, F), &&\omega(E, F)=g(E, JF).  
\end{align*}
A compatible triplet $(J, g, \omega)$ is said to be (resp. conformal) K\"ahler 
if $J$ is integrable and $\omega$ is (resp. conformally) integrable.  
In oriented $2$-dimensional cases, 
one can start with an arbitrary Riemannian metric $g$, 
take its area form $\omega$, 
and then define $J$ by the equation $\omega(E, F)=g(E, JF)$ 
to obtain a compatible triplet, which has to be K\"ahler by dimension considerations.  

Let $\left(\check{J}, \check{g}, \check{\omega}\right)$ 
and $\left(\hat{J}, \hat{g}, \hat{\omega}\right)=\left(\hat{J}(f), \hat{g}(f), \hat{\omega}(f)\right)$ be the K\"ahler triplets 
defined by the Fubini-Study metric $\check{g}$ on $\CP^1$ and a $S^1$-invariant metric $\hat{g}=f^2(t)d\theta^2+dt^2$ on $S^2$, respectively.  
Using the invariance of $\left(\hat{J}, \hat{g}, \hat{\omega}\right)$ under the action $S^1\lact S^2$ of structure group, 
we deduce that there exists a unique compatible triplet $(J_1, g_1, \omega_1)=\left(J_1(f), g_1(f), \omega_1(f)\right)$ on $\Sigma_1$ 
with the following properties.  
\begin{enumerate}
\item The projection $\pi_1: (\Sigma_1, J_1, g_1, \omega_1)\to(\CP^1, \check{J}, \check{g}, \check{\omega})$ preserves the triplets.  
	More precisely, $\pi_1$ is both an almost holomorphic map and a Riemannian submersion.  
\item Each of the orthogonal and symplectic complements of the vertical distribution $\Ker d\pi\subset T\Sigma_1$ 
	coincides with the connection $\mcal{H}_1$.  
	In particular, $\mcal{H}_1$ is $J$-invariant.  
\item For each $\tilde{x}\in S^3$, 
	the map $\iota_{\tilde{x}}: S^2\to\Sigma_1$ defined by $\iota_{\tilde{x}}\left(\hat{x}\right)=q_1\left(\tilde{x}, \hat{x}\right)$ preserves the triplets 
	in the following sense: 
	the embedding $\iota_{\tilde{x}}$ is 
	an almost biholomorphic map, an isometry, and an almost symplectomorphism onto its image $\pi_1^{-1}\left(p_1\left(\tilde{x}\right)\right)$.  
\end{enumerate}
From the $\U(2)$-invariance 
of both the connection $\mcal{H}_1$ on $\Sigma_1$ and the compatible triplet $\left(\check{J}, \check{g}, \check{\omega}\right)$ on $\CP^1$, 
it follows that the conformal K\"ahler triplet $(J_1, g_1, \omega_1)$ is invariant under the action $\U(2)\lact\Sigma_1$.  
We also observe that the almost complex structure $J$ is integrable, 
giving $\Sigma_1$ the structure of locally trivial holomorphic fiber bundle, 
and $\omega$ is conformally integrable (see below for a description of its \textit{Lee form} \cite{Vai}).  

When $m\ge 2$, we define a $\U(2)/\Gamma_m$-invariant principal connection $\widetilde{\mcal{H}}_m$ on $S^3/\Gamma_m$ 
through covering map, 
i.e., $\widetilde{\mcal{H}}_m$ is defined as the image of $\widetilde{\mcal{H}}_1$ under the covering map $S^3\to S^3/\Gamma_m$.  
Following similar arguments out of the induced connection $\mcal{H}_m$ on $\Sigma_m$ 
and the same K\"ahler triplets $\left(\check{J}, \check{g}, \check{\omega}\right)$, $\left(\hat{J}, \hat{g}, \hat{\omega}\right)$ as in the $m=1$ case, 
we define a conformal K\"ahler triplet $(J_m, g_m, \omega_m)=\left(J_m(f), g_m(f), \omega_m(f)\right)$ on $\Sigma_m$ with analogous properties.  

We remark that the conformally symplectic form $\omega_m(f)$ is not integrable for each $m\ge 1$.  
This follows either from the explicit description $mf(t)dt$ of its Lee form 
on the submanifold $\left(S^3/\Gamma_m\right)\times(-T, T)$ to be defined in the next section, 
or from non-integrability\footnote{
	Under the assumption $m\ge 1$, the connection $\mcal{H}_m$ is not integrable 
	since the associated principal connection $\widetilde{\mcal{H}}_m$ corresponds to $m\check{\omega}$, 
	the area form $\check{\omega}$ of the Fubini-Study metric on $\CP^1$ multiplied by $m$ (cf. \cite{KobS}).  
}
of the connections $\mcal{H}_m$ (cf. \cite[4.1]{Wat}).  

\subsection{Open dense submanifolds $\left(S^3/\Gamma_m\right)\times(-T, T)$ in $\Sigma_m$}\label{odsub}
Let $S^1\times(-T, T)$ be the open dense cylinder embedded in $S^2$ and $\iota: S^1\times(-T, T)\to S^2$ the inclusion map.  
Since the cylinder is invariant under the $S^1$ action, 
the product map $\id\times\iota: S^3\times S^1\times(-T, T)\to S^3\times S^2$ induces an embedding 
of the associated fiber bundle $S^3\times_{S^1}\left(S^1\times(-T, T)\right)$ into $\Sigma_1$ with open dense image.  
On the other hand, $S^3\times_{S^1}\left(S^1\times(-T, T)\right)$ is canonically isomorphic to $S^3\times(-T, T)$.  
Hence it follows that $S^3\times(-T, T)$ is embedded in $\Sigma_1$ as an open dense submanifold.  
The situation is summarized as follows.  
Confer diagram \eqref{principalmap}.   
\[
\xymatrix{
S^3\times S^2\ar[rrrr]^-{\pr_1}\ar[dddd]_-{q_1}&&&&S^3\ar[dddd]^-{p_1}\\
&S^3\times S^1\times(-T, T)\ar[lu]_-{\id\times\iota}\ar[rd]\ar[rr]^-{\pr_1}\ar[dd]&&S^3\ar[dd]^-{p_1}\ar[ru]_-{\id}&\\
&&S^3\times(-T, T)\ar[rd]^-{p_1\circ\pr_1}\ar[ld]_-{\cong}&&\\
&S^3\times_{S^1}\left(S^1\times(-T, T)\right)\ar[rr]\ar[ld]&&\CP^1\ar[rd]^-{\id}&\\
\Sigma_1\ar[rrrr]^-{\pi_1}&&&&\CP^1.  
}
\]

All the structures defined previously on $\Sigma_1$ have simple descriptions on $S^3\times (-T, T)$.  
Firstly, the fiber bundle projection $\pi_1: \Sigma_1\to\CP^1$ is equivalent to 
the composition $p_1\circ\pr_1$ of the projection $\pr_1: S^3\times(-T, T)\to S^3$ onto the first factor and the Hopf fibration $p_1: S^3\to\CP^1$.  
Secondly, the $\U(2)$ action on $\Sigma_1$ is equivalent to the diagonal action $\U(2)\lact S^3\times(-T, T)$, 
where $\U(2)$ acts on the interval trivially.  
This explains why the action $\U(2)\lact\Sigma_1$ is of cohomogeneity one.  
Thirdly, the conformal K\"ahler triplet $(J_1, g_1, \omega_1)$ have the following expressions
\begin{gather*}
J_1=\begin{pmatrix}
0&1&0&0\\
-1&0&0&0\\
0&0&0&\frac{1}{f(t)}\\
0&0&-\frac{1}{f(t)}&0
\end{pmatrix}, \quad
g_1=\begin{pmatrix}
1&0&0&0\\
0&1&0&0\\
0&0&f^2(t)&0\\
0&0&0&1
\end{pmatrix}, \\
\omega_1=\begin{pmatrix}
0&1&0&0\\
-1&0&0&0\\
0&0&0&f(t)\\
0&0&-f(t)&0
\end{pmatrix}
\end{gather*}
in terms of the moving frame $X, Y, V, \partial_t$ on $S^3\times(-T, T)$.  
Here and henceforth, we use the same symbols $X, Y, V, \partial_t$ 
for the lifts of the original vector fields $X, Y, V$ on $S^3$ and $\partial_t$ on $(-T, T)$ through the canonical projections.  

When $m\ge 2$, 
the product manifold $\left(S^3/\Gamma_m\right)\times(-T, T)$ is embedded in $\Sigma_m$.  
The fiber bundle projection $\pi_m: \Sigma_m\to\CP^1$ and the action $\U(2)/\Gamma_m\lact\Sigma_m$ are described analogously.  
Additionally, we lift the conformal K\"ahler triplet $\left(J_m, g_m, \omega_m\right)$ 
through the covering map $S^3\times(-T, T)\to\left(S^3/\Gamma_m\right)\times (-T, T)$ 
to obtain their expressions 
\begin{gather*}\label{expressions_m}
J_m=\begin{pmatrix}
0&1&0&0\\
-1&0&0&0\\
0&0&0&\frac{1}{mf(t)}\\
0&0&-\frac{1}{mf(t)}&0
\end{pmatrix}, \quad
g_m=\begin{pmatrix}
1&0&0&0\\
0&1&0&0\\
0&0&m^2f^2(t)&0\\
0&0&0&1
\end{pmatrix}, \\
\omega_m=\begin{pmatrix}
0&1&0&0\\
-1&0&0&0\\
0&0&0&mf(t)\\
0&0&-mf(t)&0
\end{pmatrix}
\end{gather*}
in terms of $X, Y, V, \partial_t$.  
The factor $m$ comes in for the following reason.  
We remark that the structure group $S^1/\Gamma_m$ is identified with $S^1$ so that the diagram
\[
\xymatrix{
&S^1\ar[ld]\ar[rd]^-{e^{i\theta}\mapsto e^{im\theta}}&\\
S^1/\Gamma_m\ar[rr]^-{\cong}&&S^1
}
\]
commutes.  
The fundamental vector fields $V$ on $S^3$ and $V_m$ on $S^3/\Gamma_m$ 
corresponding to the same vector field $\partial_{\theta}$ on $S^1$ 
are thus related by the covering map up to factor $m$.  
That is, the pushforward of $V$ by the covering map happens to be $mV_m$.  
Since the vertical components of the conformal K\"ahler triplet $(J_m, g_m, \omega_m)$ with respect to $V_m, \partial_t$ 
are the same as the corresponding components of the triplet $(J_1, g_1, \omega_1)$ with respect to $V, \partial_t$, 
we have to take the factor $m$ into account as above.  

\subsection{Riemannian metrics satisfying conditions (I) and (II)}\label{characterization}
Let $\tilde{g}$ be a $\U(2)$-invariant Riemannian metric on $S^3$.  
Then, in terms of the moving frame $X$, $Y$, and $V$, $\tilde{g}$ is represented by a diagonal matrix 
$\left(\begin{smallmatrix}
b^2&0&0\\
0&b^2&0\\
0&0&a^2
\end{smallmatrix}\right)$ for some real numbers $a, b>0$.  
Indeed, 
since $\tilde{g}$ is $\SU(2)$-invariant, 
it is represented by a positive-definite symmetric matrix in terms of the $\SU(2)$-invariant moving frame $X, Y, V$; 
the isotropy subgroup at $(1, 0)\in S^3$ being the matrices of the form $\left(\begin{smallmatrix}1&0\\0&e^{i\theta}\end{smallmatrix}\right)$, 
it should be of the form above.  
A Riemannian manifold $S^3$ equipped with a $\U(2)$-invariant metric is called a Berger sphere (cf. \cite{Pet}).  

Therefore, if a metric $g$ on $\Sigma_1$ satisfies condition (II), 
then there exist strictly positive $C^{\infty}$ functions $f(t)$ and $h(t)$ on $(-T, T)$ such that 
$g$ is represented by the matrix 
$\left(\begin{smallmatrix}
h^2(t)&0&0&0\\
0&h^2(t)&0&0\\
0&0&f^2(t)&0\\
0&0&0&1
\end{smallmatrix}\right)$
on $S^3\times(-T, T)$ in terms of $X, Y, V, \partial_t$.  
If $g$ satisfies condition (I) as well, then $h(t)$ should be a constant, 
which is necessarily $1$ since the area of the base space $\CP^1$ is normalized to be $\pi$.  
Recalling the fact that $S^3\times(-T, T)$ is open and dense in $\Sigma_1$, 
it follows that, 
if $g$ satisfies conditions (I) and (II) at the same time, 
then $g$ agrees on the entire $\Sigma_1$ with the metric $g_1=g_1(f)$ defined previously.  

When $m\ge 2$, 
since each $\U(2)/\Gamma_m$-invariant metric on $S^3/\Gamma_m$ coincides with the metric of a Berger sphere up to covering, 
each metric $g$ satisfying conditions (I) and (II) is virtually represented on $S^3\times (-T, T)$ by
\begin{align}\label{Higher}
\begin{pmatrix}
1&0&0&0\\
0&1&0&0\\
0&0&m^2f^2(t)&0\\
0&0&0&1
\end{pmatrix}
\end{align}
in terms of $X, Y, V, \partial_t$.  

Lastly, we consider the boundary conditions for $f$ to be satisfied.  
Let $f: (-T, T)\to\real$ be a strictly positive function of class $C^{\infty}$ 
and define the Riemannian metric $g=g(f)$ on $S^3\times(-T, T)$ by the following matrix 
\begin{align}\label{gf}
g=g(f)\sim
\begin{pmatrix}
1&0&0&0\\
0&1&0&0\\
0&0&f^2(t)&0\\
0&0&0&1
\end{pmatrix}
\end{align}
in terms of $X, Y, V, \partial_t$.  
It is well known that a $S^1$-invariant metric $f^2(t)d\theta^2+dt^2$ defined on the open cylinder $S^1\times(-T, T)$ 
extends to a $C^{\infty}$ Riemannian metric on $S^2$ if and only if $f$ satisfies the following boundary conditions (cf. \cite[p. 213]{KW}, \cite[4.6]{Bes1})
\begin{align*}
f(\pm T)=0, &&f'(\pm T)=\mp 1, &&f^{(2l)}(\pm T)=0\quad\left(l=1, 2, \dots\right).  
\end{align*}
It follows that the metric $g(f)$ extends to a $C^{\infty}$ Riemannian metric on $\Sigma_1$ 
if and only if $f$ satisfies these boundary conditions.  

When $m\ge 2$, looking at the expression \eqref{Higher}, 
we notice that the metric $g(f)$ extends to a $C^{\infty}$ Riemannian metric on $\Sigma_m$ 
through the covering map $S^3\times(-T, T)\to \left(S^3/\Gamma_m\right)\times(-T, T)$ 
if and only if $f$ satisfies the following boundary conditions 
\begin{align}\label{BC}
f(\pm T)=0, &&f'(\pm T)=\mp m, &&f^{(2l)}(\pm T)=0\quad\left(l=1, 2, \dots\right).  
\end{align}

Summarizing this section, we obtain the following propositions.  
\begin{proposition}\label{identify}
For each $m\ge 1$, 
the collection of all $C^{\infty}$ metrics on the $m$-th Hirzebruch surface $\Sigma_m$ satisfying conditions \textup{(I)} and \textup{(II)} 
can be identified with all the metrics on $S^3\times(-T, T)$ of the form $g=g(f)$ in \eqref{gf}, 
where $f: (-T, T)\to\real$ runs over all the strictly positive $C^{\infty}$ functions satisfying boundary conditions \eqref{BC}.  
\end{proposition}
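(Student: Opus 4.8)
The plan is to realize the assignment $f\mapsto g(f)$ of \eqref{gf} as the asserted bijection, verifying in turn that it is well defined, injective, and surjective; nearly every ingredient has already been prepared in Section \ref{characterization}, so the task is chiefly one of assembly. First I would confirm well-definedness: for $f$ strictly positive and satisfying \eqref{BC}, the expression \eqref{gf} defines a metric on the open dense submanifold $\left(S^3/\Gamma_m\right)\times(-T, T)\subset\Sigma_m$ (via the covering when $m\ge 2$), and \eqref{BC} is precisely the condition guaranteeing that it closes up to a $C^\infty$ metric across the two singular orbits. This extension is the invariant metric $g_m(f)$ of Section \ref{cKahler}, so (II) holds automatically; and since its $X, Y$-block is the identity, $\pi_m$ is a Riemannian submersion onto $\CP^1$ with its Fubini--Study metric of area $\pi$, which is (I).

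Next I would run the inverse direction. Because $S^3\times(-T, T)$ is dense in $\Sigma_m$, any $C^\infty$ metric is determined by its restriction there, so it suffices to read off $f$ on the open dense submanifold; injectivity is then immediate, since the $(V, V)$-entry of \eqref{gf} equals $f^2(t)$ and positivity recovers $f$ uniquely. For surjectivity the Berger-sphere reduction is the key step: condition (II) forces a metric $g$ into the form $\operatorname{diag}\!\left(h^2(t), h^2(t), f^2(t), 1\right)$ in the frame $X, Y, V, \partial_t$, because the isotropy circle fixes $V$ and rotates $X, Y$, leaving only these parameters free. Condition (I) then forces the horizontal block $\operatorname{diag}(h^2, h^2)$ to be the pullback of a fixed metric on $\CP^1$, so $h$ is constant, and the area normalization pins it to $h\equiv 1$. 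Hence $g$ agrees with $g(f)$ on the dense submanifold, therefore on all of $\Sigma_m$, for a unique strictly positive $C^\infty$ function $f$, which must satisfy \eqref{BC} precisely because $g$ is smooth on $\Sigma_m$.

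The one genuinely technical point --- and the step I expect to be the main obstacle --- is the smooth-extension criterion \eqref{BC} at the fixed points of the fiber $S^1$-action. This is the classical fact that $f^2(t)\,d\theta^2+dt^2$ closes up smoothly across the poles of $S^2$ exactly when $f$ vanishes to first order with the prescribed derivative and all even-order derivatives vanish there, and its subtlety lies in controlling the full Taylor expansion in the geodesic polar coordinate rather than merely the leading behaviour. I would invoke the result cited to \cite{KW} and \cite[4.6]{Bes1} and then track how the covering map $S^3\times(-T, T)\to\left(S^3/\Gamma_m\right)\times(-T, T)$ rescales the vertical circle by the factor $m$; this rescaling is exactly what replaces the normalization $f'(\pm T)=\mp 1$ of the $m=1$ case by $f'(\pm T)=\mp m$ in \eqref{BC}, while the even-order vanishing conditions, being insensitive to a linear reparametrization of the fiber, carry over unchanged.
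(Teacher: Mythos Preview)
Your proposal is correct and follows essentially the same route as the paper: the content of Section~\ref{characterization} is precisely the Berger-sphere reduction plus the area normalization for surjectivity, the density argument for injectivity, and the cited smooth-extension criterion for the boundary conditions, with the factor $m$ traced through the covering. The only difference is organizational---you package the argument as an explicit bijection (well-defined, injective, surjective) whereas the paper presents it as a running narrative culminating in the proposition---and both treatments leave the normalization $g(\partial_t,\partial_t)=1$ and $g(V,\partial_t)=0$ implicit as the standard arc-length/normal-geodesic choice in cohomogeneity one, ultimately deferring to B\'erard-Bergery~\cite{Ber}.
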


\begin{proposition}\label{consequences}
Each metric on $\Sigma_m$ satisfying conditions \textup{(I)} and \textup{(II)} is conformal K\"ahler 
with respect to the complex structure $J_m$.  
Moreover, the action $\U(2)/\Gamma_m\lact\Sigma_m$ of cohomogeneity one is by conformal K\"ahler automorphisms.  
The fiber bundle structure of $\Sigma_m$ is also compatible with the conformal K\"ahler triplet 
in the following sense\textup{:} 
the projection $\pi_m: \Sigma_m\to\CP^1$ is a conformal K\"ahler submersion \textup{(}cf. \textup{\cite{MR}}\textup{)} 
onto $\CP^1$ equipped with the Fubini-Study metric 
and every parallel translation between two fibers is a 
K\"ahler automorphism.  
In particular, $\pi_m$ is a Riemannian submersion with totally geodesic fibers \textup{(}cf. \textup{\cite{Herm}, \cite{Vil}}\textup{)}.  
\end{proposition}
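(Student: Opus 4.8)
The plan is to treat Proposition \ref{consequences} as a synthesis of the constructions in \S\ref{bundle}--\S\ref{characterization}, reading each assertion off from the conformal K\"ahler triplet already built. First I would invoke Proposition \ref{identify}: any $g$ satisfying (I) and (II) has the form $g=g(f)$ of \eqref{gf}, hence coincides on all of $\Sigma_m$ with the metric component $g_m(f)$ of the triplet $\left(J_m(f), g_m(f), \omega_m(f)\right)$ of \S\ref{cKahler}. Since that triplet is compatible with $J_m$ integrable and $\omega_m$ conformally integrable, the conformal K\"ahler claim is immediate. The $\U(2)/\Gamma_m$-invariance of $(J_m, g_m, \omega_m)$, which follows from the $\U(2)$-invariance of $\mcal{H}_m$ and of $\left(\check{J}, \check{g}, \check{\omega}\right)$ recorded in \S\ref{cKahler}, then yields that the cohomogeneity-one action is by conformal K\"ahler automorphisms.

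For the submersion statements I would use the three defining properties of the triplet from \S\ref{cKahler}. Property (1) says $\pi_m$ is an almost holomorphic Riemannian submersion onto $\left(\CP^1, \check{J}, \check{g}\right)$ preserving the triplets; packaged with the conformal K\"ahler structure upstairs and the genuine K\"ahler structure on the Fubini--Study base, this is exactly the assertion that $\pi_m$ is a conformal K\"ahler submersion in the sense of \cite{MR}. Property (2) identifies the metric-orthogonal complement of $\Ker d\pi_m$ with the connection $\mcal{H}_m$; hence the horizontal distribution of the Riemannian submersion is precisely $\mcal{H}_m$, and parallel translation of the submersion agrees with parallel translation of the principal connection $\widetilde{\mcal{H}}_m$ transported to the associated bundle. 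Because $\Sigma_m=\left(S^3/\Gamma_m\right)\times_{S^1}S^2$ is associated to the principal $S^1$-bundle $S^3/\Gamma_m\to\CP^1$ and the structure group $S^1$ acts on the fibre $\left(S^2, \hat{J}, \hat{g}, \hat{\omega}\right)$ by K\"ahler automorphisms (property (3) makes the fibre inclusions triplet-preserving), the holonomy of $\mcal{H}_m$ acts on each fibre through $S^1$, so every parallel translation between fibres is a K\"ahler automorphism.

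Finally, the totally geodesic claim follows in two equivalent ways. Conceptually, the data just assembled---an associated bundle carrying a principal connection whose structure group acts by isometries on the model fibre---is exactly the hypothesis of the Hermann--Vilms characterization \cite{Herm}, \cite{Vil}, whence the Riemannian submersion has totally geodesic fibres; this also makes the ``in particular'' literal, since vanishing of the O'Neill tensor $T$ is equivalent to parallel transport preserving fibre metrics isometrically. Computationally, I would instead verify $T\equiv 0$ directly on the open dense $\left(S^3/\Gamma_m\right)\times(-T, T)$: with $g\sim\mathrm{diag}(1, 1, m^2f^2, 1)$ in the frame $X, Y, V, \partial_t$, the vertical distribution is spanned by $V, \partial_t$ and the horizontal by $X, Y$, and a Koszul computation using $[X, \partial_t]=[V, \partial_t]=0$ together with the structure relations $[V, X]=-2Y$ and $[V, Y]=2X$ on $S^3$ shows that $\nabla_V V$, $\nabla_V\partial_t$, and $\nabla_{\partial_t}\partial_t$ have no $X, Y$ components, so the second fundamental form of the fibres vanishes.

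I expect the crux to be the parallel-transport assertion rather than the bookkeeping of the conformal K\"ahler structure: one must match two a priori different notions of parallel transport---the metric one from the Riemannian submersion and the algebraic one from the principal connection $\widetilde{\mcal{H}}_m$ on the associated bundle---which is precisely the role of property (2), and then confirm that the resulting $S^1$-holonomy acts on $\left(S^2, \hat{J}, \hat{g}, \hat{\omega}\right)$ through K\"ahler isometries. Once that identification is secured, both the K\"ahler-automorphism property of parallel translation and (via Hermann--Vilms, or via $T\equiv 0$) the totally geodesic property of the fibres follow, while the remaining claims are direct consequences of Proposition \ref{identify} and the triplet construction of \S\ref{cKahler}.
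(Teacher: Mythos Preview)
Your proposal is correct and matches the paper's approach: the paper presents Proposition~\ref{consequences} explicitly as a summary of the constructions in \S\ref{bundle}--\S\ref{characterization} (``Summarizing this section, we obtain the following propositions''), with no separate proof, so your plan of reading each assertion off from Proposition~\ref{identify} and the triplet properties (1)--(3) of \S\ref{cKahler} is exactly what is intended. One harmless slip: the bracket relations on $S^3$ are $[V,X]=2Y$ and $[V,Y]=-2X$ (cf.\ \eqref{Lie1}), not the signs you wrote, but since these brackets are horizontal either way the Koszul computation for $T\equiv 0$ is unaffected.
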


\noindent Proposition \ref{identify} is due to B\'erard-Bergery \cite{Ber}.  
See also \cite[IV]{Bes2}, \cite[XV]{Bes2}, \cite[9.K]{Bes3}, \cite{HS}, \cite{MPPS}, and the references therein.  

\section{Tensor calculations}\label{Tensors}
In this section, we carry out tensor calculations on $S^3\times\opint$.  
Our main purpose is to obtain the following formulas.  
\begin{proposition}\label{TensorCalculations}
Consider the Riemannian metric $g=g(f)$ of \eqref{gf}, 
where we impose no boundary conditions for $f$.  
Then, 
its scalar curvature $R$ and squared tensor norm $\lvert W\rvert^2$ of Weyl tensor are written as
\begin{align}
R&=-2\frac{f''}{f}-2f^2+8, \label{scalar}\\
3\lvert W\rvert^2&=R^2-12f^2R+144(f')^2+36f^4\label{WeylSquared1}
\end{align}
on $S^3\times(-T, T)$.  
Furthermore, its Bach tensor $B=B_{ij}$ is diagonalized in terms of an orthonormal moving frame to be defined in \eqref{omf}, 
and the diagonal components are 
\begin{equation}\label{Bach1}
{\setlength\arraycolsep{1pt}
\begin{array}{rrrrrrrrrr}
24B_1=24B_2&=&2R''&+2\displaystyle\frac{f'}{f}R'&+R^2&-40f^2R&-16R&+96(f')^2&-276f^4&+576f^2, \\
24B_3&=&-4R''&&-R^2&+84f^2R&+16R&-96(f')^2&+492f^4&-1056f^2, \\
24B_4&=&&-4\displaystyle\frac{f'}{f}R'&-R^2&-4f^2R&+16R&-96(f')^2&+60f^4&-96f^2.  
\end{array}}
\end{equation}
\end{proposition}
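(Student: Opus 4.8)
The plan is to compute all curvature quantities directly on the open dense submanifold $S^3\times(-T, T)$ equipped with the metric $g=g(f)$ of \eqref{gf}, exploiting the product structure and the homogeneity of the Berger sphere factor. First I would fix an orthonormal moving frame adapted to the metric: since $g$ assigns length one to $X$ and $Y$, length $f(t)$ to $V$, and length one to $\partial_t$, the natural choice is $e_1=X$, $e_2=Y$, $e_3=\frac{1}{f}V$, $e_4=\partial_t$. With this frame I would record the structure constants, i.e. the Lie brackets among $X, Y, V$ on $S^3$. From \eqref{ijk} one computes the $\SU(2)$-type relations $[X,Y]=-2V$, $[V,X]=-2Y$, $[V,Y]=2X$ (up to sign conventions to be verified from the explicit formulas), together with $[\partial_t, \cdot]=0$ since the frame on $S^3$ is $t$-independent. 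The only $t$-dependence enters through $f(t)$ in $e_3=\frac{1}{f}V$, so $[e_4,e_3]=-\frac{f'}{f}e_3$, while $e_1,e_2$ are parallel along the $t$-direction. These brackets are the entire geometric input.

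\smallskip

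Next I would apply the Koszul formula to these structure constants to obtain the Levi-Civita connection coefficients $\cd_{e_i}e_j$ in the orthonormal frame, then assemble the curvature operator $R(e_i,e_j)e_k$ and contract to get the sectional curvatures, the Ricci tensor, and finally the scalar curvature. Because the metric is a doubly-warped product of a homogeneous $3$-sphere with an interval, the Ricci tensor will be diagonal in this frame, with the $e_1,e_2$ components equal by the residual $U(1)$-symmetry. Reading off the scalar curvature should reproduce \eqref{scalar}; the terms $-2f''/f$ from the warping direction, $-2f^2$ from the shrinking Hopf circle, and the constant $8$ from the round part of the Berger sphere are exactly the expected contributions. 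The Weyl tensor $W$ is then obtained from the full curvature tensor by subtracting the Ricci and scalar parts via the standard Kulkarni–Nomizu decomposition in dimension four; computing $\lvert W\rvert^2$ as the squared tensor norm and re-expressing the result in terms of $R$, $f$, $f'$ via \eqref{scalar} should yield \eqref{WeylSquared1}.

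\smallskip

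For the Bach tensor I would use the four-dimensional formula $B_{ij}=\cd^k\cd^l W_{kijl}+\frac{1}{2}\Ric^{kl}W_{kijl}$, or equivalently the expression in terms of the Schouten tensor, $B_{ij}=\cd^k\cd_k P_{ij}-\cd^k\cd_i P_{jk}+P^{kl}W_{kijl}$ (whichever is cleaner for this frame). Since everything depends only on $t$, the covariant derivatives reduce to ordinary $t$-derivatives corrected by the connection coefficients already computed, so the second covariant derivatives will produce the $R''$, $\frac{f'}{f}R'$, and $(f')^2$ terms visible in \eqref{Bach1}. I would compute each diagonal entry $B_1=B_2$, $B_3$, $B_4$ in this adapted frame and simplify using \eqref{scalar} to trade $f''$ for $R$. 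A useful internal consistency check is that the Bach tensor is symmetric, trace-free, and divergence-free; in particular the trace condition $2B_1+B_3+B_4=0$ should hold identically after substitution, which I would verify against the three rows of \eqref{Bach1} as confirmation.

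\smallskip

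The main obstacle will be the sheer volume and fragility of the Bach-tensor computation: the second covariant derivatives of the Weyl tensor in a non-coordinate, $t$-dependent frame generate many connection-correction terms, and the cancellations leading to the relatively clean polynomial expressions in \eqref{Bach1} are delicate. I expect the bookkeeping of connection coefficients entering $\cd^k\cd^l W_{kijl}$ to be where sign and combinatorial errors are most likely, so I would lean heavily on the trace-free and divergence-free constraints as a safeguard, and cross-check $\lvert W\rvert^2$ computed directly against $\frac{3}{2}(B_1+B_2+B_3+B_4)$-type contractions wherever such an identity is available.
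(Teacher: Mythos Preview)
Your plan for the scalar curvature and for $\lvert W\rvert^2$ is exactly what the paper does: the same orthonormal frame $E_1=X$, $E_2=Y$, $E_3=\frac{1}{f}V$, $E_4=\partial_t$, Koszul's formula for the connection, direct computation of the curvature tensor and its contractions, and then the Kulkarni--Nomizu decomposition $\lvert W\rvert^2=\lvert\Rm\rvert^2-2\lvert\tsRic\rvert^2-\frac{1}{6}R^2$ to extract the Weyl norm. (Be careful with the bracket signs: from \eqref{ijk} one gets $[X,Y]=2V$, $[Y,V]=2X$, $[V,X]=2Y$, not the signs you wrote.)

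For the Bach tensor the paper takes a different and somewhat more economical route. Instead of the Weyl-based formula $B_{ij}=\nabla^k\nabla^l W_{kijl}+\frac{1}{2}\Ric^{kl}W_{kijl}$ or the Schouten version you propose, the paper invokes Derdzi\'nski's identity
\[
B_{ij}=\nabla^p\nabla_j\Ric_{pi}-\tfrac{1}{2}\nabla^p\nabla_p\Ric_{ij}-\tfrac{1}{3}\Hess_{ij}R-\tfrac{1}{12}(\Delta R)g_{ij}+\tfrac{1}{3}R\,\Ric_{ij}-\Ric_i^{\ p}\Ric_{pj}+\tfrac{1}{12}\bigl(3\lvert\Ric\rvert^2-R^2\bigr)g_{ij},
\]
which expresses $B$ entirely through the Ricci tensor and scalar curvature. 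Since $\Ric$ is diagonal with only two distinct eigenvalues here, the two traces $\nabla^p\nabla_p\Ric_{ij}$ and $\nabla^p\nabla_j\Ric_{pi}$ are the only genuinely new computations, and the Weyl components never need to be written out or differentiated. Your approach is correct and would reach \eqref{Bach1}, but it front-loads the work into computing all of $W_{kijl}$ and then carrying those through two covariant derivatives in a non-coordinate frame---precisely the step you flagged as the main source of bookkeeping errors. The Derdzi\'nski formula trades that for a shorter list of Ricci derivatives. Both routes admit the same trace-free and divergence-free checks you mention; note, however, that your proposed cross-check involving $B_1+B_2+B_3+B_4$ is vacuous since that sum is identically zero.
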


\noindent Primes refer to derivatives with respect to $t\in (-T, T)$ unless stated otherwise.  

\subsection{First order derivatives}
We could regard $S^3$ itself as the Lie group consisting of unit quaternions.  
The vector fields $V$, $X$, and $Y$ on $S^3$ defined in \eqref{ijk} are then reinterpreted as the left-invariant vector fields 
corresponding respectively to the pure quaternions $i$, $j$, and $k$ of its Lie algebra, 
so that their Lie brackets have the following cyclic relationships
\begin{align}\label{Lie1}
[V, X]=2Y, &&[X, Y]=2V, &&[Y, V]=2X.  
\end{align}
From naturality of Lie brackets, we also have
\begin{align}\label{Lie2}
[V, \delt]=[X, \delt]=[Y, \delt]=0
\end{align}
apart from formulas \eqref{Lie1}, 
where $X$, $Y$, $V$, and $\delt$ are now understood to be vector fields on the product manifold $S^3\times\opint$ 
through the canonical projections.  
With respect to the metric $g=g(f)$, these vector fields $X$, $Y$, $V$, and $\delt$ form an orthogonal moving frame with corresponding norms
\begin{align}\label{InnerProduct}
\lvert X\rvert=\lvert Y\rvert=\lvert \delt\rvert=1, &&\lvert V\rvert=f(t).  
\end{align}

We recall that covariant derivatives are determined by inner products and Lie brackets.  
More precisely, the characteristic properties of Levi-Civita connection simply lead to the following formula 
(\textit{Koszul's formula} according to \cite{Pet})
\begin{equation}\label{Koszul}
\begin{split}
2\langle \nabla_EF, G\rangle
&=E\langle F, G\rangle+F\langle G, E\rangle-G\langle E, F\rangle\\
&\quad-\langle E, [F, G]\rangle+\langle F, [G, E]\rangle+\langle G, [E, F]\rangle, 
\end{split}
\end{equation}
valid for all vector fields $E$, $F$, and $G$ on a Riemannian manifold.  

Koszul's formula \eqref{Koszul} together with the previous formulas \eqref{Lie1}, \eqref{Lie2}, and \eqref{InnerProduct} 
allows us to calculate the first covariant derivatives in terms of the moving frame $X$, $Y$, $V$, $\delt$.  
The following two observations are fundamental\footnote{
	For its proof, we note that if $\phi=\phi(t)$ is a function on $S^3\times(-T, T)$ depending only on $t$, 
	then $X(\phi)=Y(\phi)=V(\phi)=0$ and $\delt(\phi)=\phi'(t)$.  
	}.  
\begin{enumerate}
\item The first three terms $E\langle F, G\rangle+F\langle G, E\rangle-G\langle E, F\rangle$ in \eqref{Koszul} vanish 
	unless $E$, $F$, $G$ agree with $V$, $V$, $\delt$ up to order.  
\item The last three terms $-\langle E, [F, G]\rangle+\langle F, [G, E]\rangle+\langle G, [E, F]\rangle$ in \eqref{Koszul} vanish 
	unless $E$, $F$, $G$ agree with $X$, $Y$, $V$ up to order.  
\end{enumerate}
An immediate consequence of these observations is that 
each of the covariant derivatives
\[
\nabla_XX, \quad\nabla_YY, \quad\nabla_{\delt}\delt, 
\quad\nabla_{\delt}X, \quad\nabla_X{\delt}, \quad\nabla_{\delt}Y, \quad\nabla_Y\delt
\]
is identically equal to zero.  
For the remaining cases, such computations as 
\begin{align*}
2\langle \nabla_VV, \delt\rangle&=-\delt\langle V, V\rangle=-2ff', \\*
\nabla_VV&=\langle \nabla_VV, \delt\rangle\delt=-ff'\delt\\
\intertext{and}
2\langle \nabla_XY, V\rangle&=-\langle X, [Y, V]\rangle+\langle Y, [V, X]\rangle+\langle V, [X, Y]\rangle=-2+2f^2+2=2f^2, \\*
\nabla_XY&=\langle \nabla_XY, \frac{1}{f}V\rangle\frac{1}{f}V=V
\end{align*}
lead to Table \ref{nablaTM} for the first covariant derivatives of $g$.  
\begin{table}\begin{center}\renewcommand{\arraystretch}{1.5}
	\begin{tabular}{c|cccc}
		$\nabla$&$X$&$Y$&$V$&$\delt$\\\hline
		$X$&$0$&$V$&$-f^2Y$&$0$\\
		$Y$&$-V$&$0$&$f^2X$&$0$\\
		$V$&$-(f^2-2)Y$&$(f^2-2)X$&$-ff'\delt$&$\frac{f'}{f}V$\\
		$\delt$&$0$&$0$&$\frac{f'}{f}V$&$0$
	\end{tabular}\vspace{.7\baselineskip}\caption{$\nabla$ in terms of $X$, $Y$, $V$, $\delt$}\label{nablaTM}
	\begin{tabular}{c|cccc}
		$\nabla$&$E_1$&$E_2$&$E_3$&$E_4$\\\hline
		$E_1$&$0$&$fE_3$&$-fE_2$&$0$\\
		$E_2$&$-fE_3$&$0$&$fE_1$&$0$\\
		$E_3$&$-\frac{f^2-2}{f}E_2$&$\frac{f^2-2}{f}E_1$&$-\frac{f'}{f}E_4$&$\frac{f'}{f}E_3$\\
		$E_4$&$0$&$0$&$0$&$0$
	\end{tabular}\vspace{.7\baselineskip}\caption{$\nabla$ in terms of $E_1, \dots ,E_4$}\label{nablaR}
\end{center}\end{table}

Henceforth, all tensor calculations are performed in terms of the orthonormal moving frame
\begin{equation}\label{omf}
E_1=X, \quad E_2=Y, \quad E_3=\frac{1}{f}V, \quad E_4=\delt.  
\end{equation}
The formulas for first derivatives obtained in this section are readily rewritten in terms of the new frame, 
and the results are summarized in Tables \ref{nablaR} and \ref{LieR}.  
\begin{table}\begin{center}\renewcommand{\arraystretch}{1.5}
	\begin{tabular}{c|cccc}
		$[\bullet, \ \bullet]$&$X$&$Y$&$V$&$\delt$\\\hline
		$X$&$0$&$2V$&$-2Y$&$0$\\
		$Y$&$-2V$&$0$&$2X$&$0$\\
		$V$&$2Y$&$-2X$&$0$&$0$\\
		$\delt$&$0$&$0$&$0$&$0$
	\end{tabular}\vspace{.7\baselineskip}\caption{Lie brackets of $X$, $Y$, $V$, $\delt$}\label{LieTM}
	\begin{tabular}{c|cccc}
		$[\bullet, \ \bullet]$&$E_1$&$E_2$&$E_3$&$E_4$\\\hline
		$E_1$&$0$&$2fE_3$&$-\frac{2}{f}E_2$&$0$\\
		$E_2$&$-2fE_3$&$0$&$\frac{2}{f}E_1$&$0$\\
		$E_3$&$\frac{2}{f}E_2$&$-\frac{2}{f}E_1$&$0$&$\frac{f'}{f}E_3$\\
		$E_4$&$0$&$0$&$-\frac{f'}{f}E_3$&$0$
	\end{tabular}\vspace{.7\baselineskip}\caption{Lie brackets of $E_1, \dots ,E_4$}\label{LieR}
\end{center}\end{table}
We observe that all components of the fourth row in Table \ref{nablaR} are zero, 
that is, $\nabla_{E_4}E_i=0$ for each $i=1, \dots, 4$.  
This observation simplifies the computations that follow.  

\subsection{Second order derivatives}
Let $\phi: S^3\times\opint\to\real$ be a smooth function depending only on $t$.  
The Hessian of $\phi$ is by definition written as
\begin{equation}\label{defHess}
\Hess\phi(E_i, E_j)=E_i(E_j\phi)-(\nabla_{E_i}E_j)\phi.  
\end{equation}
Since $E_1(\phi)=E_2(\phi)=E_3(\phi)=0$ and $E_4(\phi)=\phi'(t)$, 
it follows that the first term $E_i(E_j\phi)$ in \eqref{defHess} vanishes unless $i=j=4$, 
and we have $E_4(E_4\phi)=\phi''$ in this case.  
Additionally, Table \ref{nablaR} tells us that the second term $-(\nabla_{E_i}E_j)\phi$ in \eqref{defHess} vanishes 
unless $i=j=3$, and we have $-(\nabla_{E_3}E_3)\phi=\frac{f'}{f}\phi'$ in this case.  
Therefore, the Hessian of $\phi$ is diagonalized in terms of $\{E_i\}$, 
and its diagonal components are 
\begin{equation}\label{HessPhi}
\Hess_1\phi=\Hess_2\phi=0, \quad\Hess_3\phi=\frac{f'}{f}\phi', \quad\text{and}\quad\Hess_4\phi=\phi'', 
\end{equation}
whence we obtain 
\begin{equation}\label{LaplacePhi}
\Delta\phi=-\trace\Hess\phi=-\phi''-\frac{f'}{f}\phi'.  
\end{equation}  
Here, we have simply written, for instance, $\Hess_1\phi$ for the $(1, 1)$-component of $\Hess\phi$.  
In the sequel, this sort of notational conventions for covariant $2$-tensors is assumed without further mention.  

In order to compute the components of curvature tensor, 
we prepare the auxiliary tables 
for $\nabla_{E_i}\nabla_{E_j}E_k$ (Table \ref{SecondDerivatives1}) and $\nabla_{[E_i, E_j]}E_k$ (Table \ref{SecondDerivatives2}). 
These tables are consequences of direct computations using Tables \ref{nablaR} and \ref{LieR}.  

\begin{table*}\renewcommand{\arraystretch}{1.5}\centering
	\subfloat{
	\begin{tabular}{c|cccc}
		$\nabla_{\bullet}\nabla_{\bullet}E_1$&$E_1$&$E_2$&$E_3$&$E_4$\\\hline
		$E_1$&$0$&$f^2E_2$&$-(f^2-2)E_3$&$0$\\
		$E_2$&$0$&$-f^2E_1$&$0$&$0$\\
		$E_3$&$0$&$f'E_4$&$-\frac{(f^2-2)^2}{f^2}E_1$&$0$\\
		$E_4$&$0$&$-f'E_3$&$-\frac{f^2+2}{f}\frac{f'}{f}E_2$&$0$
	\end{tabular}\vspace{.7\baselineskip}}\hfill
	\subfloat{
	\begin{tabular}{c|cccc}
		$\nabla_{\bullet}\nabla_{\bullet}E_2$&$E_1$&$E_2$&$E_3$&$E_4$\\\hline
		$E_1$&$-f^2E_2$&$0$&$0$&$0$\\
		$E_2$&$f^2E_1$&$0$&$-(f^2-2)E_3$&$0$\\
		$E_3$&$-f'E_4$&$0$&$-\frac{(f^2-2)^2}{f^2}E_2$&$0$\\
		$E_4$&$f'E_3$&$0$&$\frac{f^2+2}{f}\frac{f'}{f}E_1$&$0$
	\end{tabular}\vspace{.7\baselineskip}}\hfill
	\subfloat{
	\begin{tabular}{c|cccc}
		$\nabla_{\bullet}\nabla_{\bullet}E_3$&$E_1$&$E_2$&$E_3$&$E_4$\\\hline
		$E_1$&$-f^2E_3$&$0$&$0$&$0$\\
		$E_2$&$0$&$-f^2E_3$&$0$&$0$\\
		$E_3$&$-(f^2-2)E_1$&$-(f^2-2)E_2$&$-\left(\frac{f'}{f}\right)^2E_3$&$0$\\
		$E_4$&$-f'E_2$&$f'E_1$&$-\frac{ff''-(f')^2}{f^2}E_4$&$0$
	\end{tabular}\vspace{.7\baselineskip}}\hfill
	\subfloat{
	\begin{tabular}{c|cccc}
		$\nabla_{\bullet}\nabla_{\bullet}E_4$&$E_1$&$E_2$&$E_3$&$E_4$\\\hline
		$E_1$&$0$&$0$&$-f'E_2$&$0$\\
		$E_2$&$0$&$0$&$f'E_1$&$0$\\
		$E_3$&$0$&$0$&$-\left(\frac{f'}{f}\right)^2E_4$&$0$\\
		$E_4$&$0$&$0$&$\frac{ff''-(f')^2}{f^2}E_3$&$0$
	\end{tabular}\vspace{.7\baselineskip}}
\caption{Second covariant derivatives $\nabla_{E_i}\nabla_{E_j}E_k$}\label{SecondDerivatives1}
\end{table*}

\begin{table*}\renewcommand{\arraystretch}{1.5}\centering
	\subfloat{
	\begin{tabular}{c|cccc}
		$\nabla_{[\bullet, \ \bullet]}E_1$&$E_1$&$E_2$&$E_3$&$E_4$\\\hline
		$E_1$&$0$&$-2(f^2-2)E_2$&$2E_3$&$0$\\
		$E_2$&$2(f^2-2)E_2$&$0$&$0$&$0$\\
		$E_3$&$-2E_3$&$0$&$0$&$-\frac{f^2-2}{f}\frac{f'}{f}E_2$\\
		$E_4$&$0$&$0$&$\frac{f^2-2}{f}\frac{f'}{f}E_2$&$0$
	\end{tabular}\vspace{.7\baselineskip}}\hfill
	\subfloat{
	\begin{tabular}{c|cccc}
		$\nabla_{[\bullet, \ \bullet]}E_2$&$E_1$&$E_2$&$E_3$&$E_4$\\\hline
		$E_1$&$0$&$2(f^2-2)E_1$&$0$&$0$\\
		$E_2$&$-2(f^2-2)E_1$&$0$&$2E_3$&$0$\\
		$E_3$&$0$&$-2E_3$&$0$&$\frac{f^2-2}{f}\frac{f'}{f}E_1$\\
		$E_4$&$0$&$0$&$-\frac{f^2-2}{f}\frac{f'}{f}E_1$&$0$
	\end{tabular}\vspace{.7\baselineskip}}\hfill
	\subfloat{
	\begin{tabular}{c|cccc}
		$\nabla_{[\bullet, \ \bullet]}E_3$&$E_1$&$E_2$&$E_3$&$E_4$\\\hline
		$E_1$&$0$&$-2f'E_4$&$-2E_1$&$0$\\
		$E_2$&$2f'E_4$&$0$&$-2E_2$&$0$\\
		$E_3$&$2E_1$&$2E_2$&$0$&$-\left(\frac{f'}{f}\right)^2E_4$\\
		$E_4$&$0$&$0$&$\left(\frac{f'}{f}\right)^2E_4$&$0$
	\end{tabular}\vspace{.7\baselineskip}}\hfill
	\subfloat{
	\begin{tabular}{c|cccc}
		$\nabla_{[\bullet, \ \bullet]}E_4$&$E_1$&$E_2$&$E_3$&$E_4$\\\hline
		$E_1$&$0$&$2f'E_3$&$0$&$0$\\
		$E_2$&$-2f'E_3$&$0$&$0$&$0$\\
		$E_3$&$0$&$0$&$0$&$\left(\frac{f'}{f}\right)^2E_3$\\
		$E_4$&$0$&$0$&$-\left(\frac{f'}{f}\right)^2E_3$&$0$
	\end{tabular}\vspace{.7\baselineskip}}
\caption{Second derivatives $\nabla_{[E_i, E_j]}E_k$}\label{SecondDerivatives2}
\end{table*}

\renewcommand{\proofname}{Proof of equation \eqref{scalar}.}
\begin{proof}
We successively compute various curvature quantities of second order, 
including those not necessary for the proof of equation \eqref{scalar} itself.  

Firstly, the components of curvature tensor
\begin{equation*}
R(E_i, E_j)E_k=\nabla_{E_i}\nabla_{E_j}E_k-\nabla_{E_j}\nabla_{E_i}E_k-\nabla_{[E_i, E_j]}E_k
\end{equation*}
are written in terms of the orthonormal moving frame $\{E_i\}$ as in Table \ref{CurvTensor}.  
Therefore, its squared tensor norm 
$\lvert R\rvert^2=\sum_{i, j, k=1}^4\lvert R(E_i, E_j)E_k\rvert^2$ 
is 
\begin{align}
\lvert R\rvert^2
&=4\left((3f^2-4)^2+f^4+6(f')^2\right)\notag\\*
	&\quad+2\left(2f^4+6(f')^2+\left(-\frac{f''}{f}\right)^2\right)+2\left(6(f')^2+\left(-\frac{f''}{f}\right)^2\right)\notag\\
&=4\left(-\frac{f''}{f}\right)^2+48(f')^2+44f^4-96f^2+64.  \label{Rsquared}
\end{align}

\begin{table*}\renewcommand{\arraystretch}{1.5}\centering
	\subfloat{
	\begin{tabular}{c|cccc}
		$R(\bullet, \ \bullet)E_1$&$E_1$&$E_2$&$E_3$&$E_4$\\\hline
		$E_1$&$0$&$(3f^2-4)E_2$&$-f^2E_3$&$0$\\
		$E_2$&$-(3f^2-4)E_2$&$0$&$-f'E_4$&$f'E_3$\\
		$E_3$&$f^2E_3$&$f'E_4$&$0$&$2f'E_2$\\
		$E_4$&$0$&$-f'E_3$&$-2f'E_2$&$0$
	\end{tabular}\vspace{.7\baselineskip}}\hfill
	\subfloat{
	\begin{tabular}{c|cccc}
		$R(\bullet, \ \bullet)E_2$&$E_1$&$E_2$&$E_3$&$E_4$\\\hline
		$E_1$&$0$&$-(3f^2-4)E_1$&$f'E_4$&$-f'E_3$\\
		$E_2$&$(3f^2-4)E_1$&$0$&$-f^2E_3$&$0$\\
		$E_3$&$-f'E_4$&$f^2E_3$&$0$&$-2fE_1$\\
		$E_4$&$f'E_3$&$0$&$2f'E_1$&$0$
	\end{tabular}\vspace{.7\baselineskip}}\hfill
	\subfloat{
	\begin{tabular}{c|cccc}
		$R(\bullet, \ \bullet)E_3$&$E_1$&$E_2$&$E_3$&$E_4$\\\hline
		$E_1$&$0$&$2f'E_4$&$f-2E_1$&$f'E_2$\\
		$E_2$&$-2f'E_4$&$0$&$f^2E_2$&$-f'E_1$\\
		$E_3$&$-f^2E_1$&$-f^2E_2$&$0$&$\frac{f''}{f}E_4$\\
		$E_4$&$-f'E_2$&$f'E_1$&$-\frac{f''}{f}E_4$&$0$
	\end{tabular}\vspace{.7\baselineskip}}\hfill
	\subfloat{
	\begin{tabular}{c|cccc}
		$R(\bullet, \ \bullet)E_4$&$E_1$&$E_2$&$E_3$&$E_4$\\\hline
		$E_1$&$0$&$-2f'E_3$&$-f'E_2$&$0$\\
		$E_2$&$2f'E_3$&$0$&$f'E_1$&$0$\\
		$E_3$&$f'E_2$&$-f'E_1$&$0$&$-\frac{f''}{f}E_3$\\
		$E_4$&$0$&$0$&$\frac{f''}{f}E_3$&$0$
	\end{tabular}\vspace{.7\baselineskip}}
\caption{Curvature tensor of type $(1, 3)$}\label{CurvTensor}
\end{table*}

Secondly, we observe from Table \ref{CurvTensor} that the components of Ricci tensor 
\begin{equation*}
\Ric(E_j, E_k)=\sum_{i=1}^4\langle R(E_i, E_j)E_k, E_i\rangle
\end{equation*}
vanishes unless $j=k$, 
and its diagonal components are 
\begin{equation}\label{Ric}
\Ric_1=\Ric_2=-2f^2+4, \quad\Ric_3=-\frac{f''}{f}+2f^2, \quad\text{and}\quad\Ric_4=-\frac{f''}{f}, 
\end{equation}
from which it follows that the squared tensor norm of $\Ric$ is 
\begin{equation}\label{RicSquared}
\lvert \Ric\rvert^2=2\left(-\frac{f''}{f}\right)^2-4ff''+12f^4-32f^2+32.  
\end{equation}

Thirdly, the desired equation \eqref{scalar} 
for scalar curvature is an immediate consequence of \eqref{Ric}.  
The following formulas 
\begin{align}
\lvert \tsRic\rvert^2&=\left(-\frac{f''}{f}\right)^2-8\left(-\frac{f''}{f}\right)-6ff''+11f^4+24f^2+16 \label{tsRicSquared}\\
R^2&=4\left(-\frac{f''}{f}\right)^2+32\left(-\frac{f''}{f}\right)+8ff''+4^4-32f^2+64, \label{scalarSquared}
\end{align}
later turn out to be convenient, 
where $\tsRic=\Ric-(R/4)g$ is the traceless Ricci tensor and $\lvert\tsRic\rvert$ represents its tensor norm.  
Note that $\lvert \tsRic\rvert^2=\lvert \Ric\rvert^2-R^2/4$ in our dimension.  
\end{proof}
\renewcommand{\proofname}{Proof}

From the formulas obtained so far, 
we prove the second formula of Proposition \ref{TensorCalculations}.  
\renewcommand{\proofname}{Proof of equation \eqref{WeylSquared1}.}
\begin{proof}
We denote by $\alpha\knp\beta$ the Kulkarni-Nomizu product of two symmetric $2$-tensors $\alpha$ and $\beta$.  
Its components are defined to be
\begin{equation*}
(\alpha\knp\beta)_{ijkl}=\alpha_{ik}\beta_{jl}+\alpha_{jl}\beta_{ik}-\alpha_{il}\beta_{jk}-\alpha_{jk}\beta_{il}.  
\end{equation*}
When $\beta$ happens to be the metric tensor $g$, 
a direct computation using indices shows that the tensor norms of $\alpha\knp g$ and $\alpha$ have the following relationship
\begin{equation*}
\lvert \alpha\knp g\rvert^2=4(n-2)\lvert \alpha\rvert^2+4(\trace\alpha)^2.  
\end{equation*}
Here, $n=4$ is the dimension of the manifold of our concern, and the norms and trace are taken with respect to $g$.  
Thus, in particular, we have 
\begin{align}\label{KNPandNORM2}
\lvert g\knp g\rvert^2=96, &&\lvert \tsRic\knp g\rvert^2=8\lvert \tsRic\rvert^2.  
\end{align}

Since the decomposition
\begin{equation*}
\Rm=W-\half\tsRic\knp g-\frac{R}{24}g\knp g
\end{equation*}
of covariant curvature tensor $\Rm$ is orthogonal, 
we have 
\begin{equation*}
\lvert W\rvert^2
=\lvert \Rm\rvert^2-\frac{1}{4}\lvert \tsRic\knp g\rvert^2-\frac{R^2}{24^2}\lvert g\knp g\rvert^2
=\lvert R\rvert^2-2\lvert \tsRic\rvert^2-\frac{1}{6}R^2.  
\end{equation*}
Therefore, equations \eqref{Rsquared}, \eqref{tsRicSquared}, and \eqref{scalarSquared} yield
\begin{equation}\label{WeylSquared2}
3\lvert W\rvert^2
=4\left(\frac{-f''}{f}\right)^2+32\left(\frac{-f''}{f}\right)+32ff''+144(f')^2+64f^4-128f^2+64.  
\end{equation}
We compare the following expression
\begin{equation*}
R^2-12f^2R
=4\left(\frac{-f''}{f}\right)^2+32\left(\frac{-f''}{f}\right)+32ff''+28f^4-128f^2+64
\end{equation*}
with the previous equation \eqref{WeylSquared2} to complete the proof.    
\end{proof}

\subsection{Third order derivatives}
We introduce the following notation 
\begin{equation}
\rho_i:=\frac{f^{(i)}}{f}, \qquad(i=1, 2, 3, 4)
\end{equation}
where $f^{(i)}$ stands for the $i$-th order derivative of $f=f(t)$.  
This is convenient because
\begin{equation}
\left( \rho_i\right)'=\rho_{i+1}-\rho_1\rho_i.  
\end{equation}
In the sequel, expressions involving higher derivatives of $f$ are written in terms of $\rho_1, \dots, \rho_4$.  

We shall compute the first covariant derivative $\nabla\Ric$ of Ricci curvature, 
whose components are by definition
\begin{equation*}
\nabla_{E_l}\Ric(E_i, E_j)
=E_l\left( \Ric(E_i, E_j)\right)-\Ric\left( \nabla_{E_l}E_i, E_j\right)-\Ric\left( E_i, \nabla_{E_l}E_j\right).  
\end{equation*}
It is readily observed that the first term $E_l\left( \Ric(E_i, E_j)\right)$ vanishes unless $l=4$, 
and $E_4\left( \Ric(E_i, E_j)\right)=\left( \Ric_{ij}\right)'$ in this case; 
on the other hand, the remaining terms vanish when $l=4$.  
The latter observation comes from $\nabla_{E_4}\equiv 0$ (cf. Table \ref{nablaR}).  
The components of $\nabla\Ric$ are summarized in Table \ref{nablaRic}.  

\begin{table*}\renewcommand{\arraystretch}{1.5}\centering
	\subfloat{
	\begin{tabular}{c|cccc}
		$\nabla_{E_1}\Ric\left( \bullet, \ \bullet\right)$&$E_1$&$E_2$&$E_3$&$E_4$\\\hline
		$E_1$&$0$&$0$&$0$&$0$\\
		$E_2$&$0$&$0$&$f\rho_2-4f^3+4f$&$0$\\
		$E_3$&$0$&$f\rho_2-4f^3+4f$&$0$&$0$\\
		$E_4$&$0$&$0$&$0$&$0$
	\end{tabular}\vspace{.7\baselineskip}}\hfill
	\subfloat{
	\begin{tabular}{c|cccc}
		$\nabla_{E_2}\Ric\left( \bullet, \ \bullet\right)$&$E_1$&$E_2$&$E_3$&$E_4$\\\hline
		$E_1$&$0$&$0$&$-f\rho_2+4f^3-4f$&$0$\\
		$E_2$&$0$&$0$&$0$&$0$\\
		$E_3$&$0$&$-f\rho_2+4f^3-4f$&$0$&$0$\\
		$E_4$&$0$&$0$&$0$&$0$
	\end{tabular}\vspace{.7\baselineskip}}\hfill
	\subfloat{
	\begin{tabular}{c|cccc}
		$\nabla_{E_3}\Ric\left( \bullet, \ \bullet\right)$&$E_1$&$E_2$&$E_3$&$E_4$\\\hline
		$E_1$&$0$&$0$&$0$&$0$\\
		$E_2$&$0$&$0$&$0$&$0$\\
		$E_3$&$0$&$0$&$0$&$-2f^2\rho_1$\\
		$E_4$&$0$&$0$&$-2f^2\rho_1$&$0$
	\end{tabular}\vspace{.7\baselineskip}}\hfill
	\subfloat{
	\begin{tabular}{c|cccc}
		$\nabla_{E_4}\Ric\left( \bullet, \ \bullet\right)$&$E_1$&$E_2$&$E_3$&$E_4$\\\hline
		$E_1$&$-4f^2\rho_1$&$0$&$0$&$0$\\
		$E_2$&$0$&$-4f^2\rho_1$&$0$&$0$\\
		$E_3$&$0$&$0$&$-\rho_3+\rho_1\rho_2+4f^2\rho_1$&$0$\\
		$E_4$&$0$&$0$&$0$&$-\rho_3+\rho_1\rho_2$
	\end{tabular}\vspace{.7\baselineskip}}
\caption{The first covariant derivative of Ricci curvature}\label{nablaRic}
\end{table*}

\subsection{Fourth order derivatives}
We are interested in the following two traces
\begin{equation}\label{defTwoTraces}
\nabla^p\nabla_p\Ric_{ij}, \qquad\nabla^p\nabla_j\Ric_{pi}
\end{equation}
of the second covariant derivative $\cd^2\Ric$ of Ricci curvature, 
in order to compute the Bach tensor with the help of Deridzi\'nski formula \eqref{Derdzinski}.  
We recall that the components of $\nabla^2\Ric$ is by definition written as
\begin{equation}\label{defNabla2Ric}
\begin{split}
\nabla_{E_k}\nabla_{E_l}\Ric(E_i, E_j)
&=E_k\left( \nabla_{E_l}\Ric(E_i, E_j)\right)-\cd_{\cd_{E_k}E_l}\Ric(E_i, E_j)\\
&\quad-\cd_{E_l}\Ric\left( \cd_{E_k}E_i, E_j\right)-\cd_{E_l}\Ric\left( E_i, \cd_{E_k}E_j\right).  
\end{split}
\end{equation}
For each term 
in \eqref{defNabla2Ric}, 
we determine the possibly nonzero components as in Tables \ref{0and*1}, \ref{0and*2}, \ref{0and*3}, and \ref{0and*4}, respectively.  
In each table, the $(i, j)$ component of the $(k, l)$ small matrix represents the corresponding term.  
For instance in Table \ref{0and*3}, the $(1, 2)$ component of the $(3, 3)$ small matrix represents 
$\cd_{E_3}\Ric\left( \cd_{E_3}E_1, E_2\right)$, which equals zero.  
Only the components filled out with $*$ are possibly nonzero, 
and we leave blank the components not of our interest to form the traces above.  

Tables \ref{0and*1}, \ref{0and*2}, \ref{0and*3}, and \ref{0and*4} tell us 
that both traces $\nabla^p\nabla_p\Ric_{ij}$ and $\nabla^p\nabla_j\Ric_{pi}$ are diagonalized in terms of $\{E_i\}$.  
Through direct computations using Tables \ref{nablaR} and \ref{nablaRic}, 
we obtain their diagonal components 
\begin{align}
\cd^p\cd_p\Ric_{11}
&=\cd^p\cd_p\Ric_{22}
=-6f^2\rho_2-8f^2\rho_1^2+8f^4-8f^2, \label{trace11}\\
\cd^p\cd_p\Ric_{33}
&=-\left( \rho_2\right)''-\rho_1\left(\rho_2\right)'+8f^2\rho_2+4f^2\rho_1^2-16f^4+16f^2, \label{trace12}\\
\cd^p\cd_p\Ric_{44}
&=-\left( \rho_2\right)''-\rho_1\left(\rho_2\right)'+4f^2\rho_1^2\label{trace13}
\intertext{and}
\cd^p\cd_1\Ric_{p1}
&=\cd^p\cd_2\Ric_{p2}
=f^2\rho_2-f^4+4f^2, \label{trace21}\\
\cd^p\cd_3\Ric_{p3}
&=-\rho_1\left( \rho_2\right)'-4f^2\rho_2-2f^2\rho_1^2+8f^4-8f^2, \label{trace22}\\
\cd^p\cd_4\Ric_{p4}
&=-\left( \rho_2\right)''-2f^2\rho_1^2.  \label{trace23}
\end{align}

\begin{table}\begin{center}\setlength{\tabcolsep}{.3em}
	\begin{minipage}{.49\linewidth}\centering
	\begin{tabular}{|cccc|cccc|cccc|cccc|}
		\hline
		0&0&0&0&0&0&0&0&0&0&0&0&0&0&0&0\\
		0&0&0&0&&&&&&&&&&&&\\
		0&0&0&0&&&&&&&&&&&&\\
		0&0&0&0&&&&&&&&&&&&\\\hline
		&&&&0&0&0&0&&&&&&&&\\
		0&0&0&0&0&0&0&0&0&0&0&0&0&0&0&0\\
		&&&&0&0&0&0&&&&&&&&\\
		&&&&0&0&0&0&&&&&&&&\\\hline
		&&&&&&&&0&0&0&0&&&&\\
		&&&&&&&&0&0&0&0&&&&\\
		0&0&0&0&0&0&0&0&0&0&0&0&0&0&0&0\\
		&&&&&&&&0&0&0&0&&&&\\\hline
		&&&&&&&&&&&&0&0&0&0\\
		&&&&&&&&&&&&0&0&0&0\\
		&&&&&&&&&&&&0&0&0&0\\
		0&0&0&0&0&0&0&0&0&0&$*$&0&0&0&0&$*$\\\hline
	\end{tabular}\vspace{.7\baselineskip}\caption{$E_k\left( \nabla_{E_l}\Ric\left( E_i, E_j\right)\right)$}\label{0and*1}\end{minipage}
	\begin{minipage}{.49\linewidth}\centering
	\begin{tabular}{|cccc|cccc|cccc|cccc|}
		\hline
		0&0&0&0&0&0&0&0&0&0&$*$&0&0&0&0&0\\
		0&0&0&0&&&&&&&&&&&&\\
		0&0&0&0&&&&&&&&&&&&\\
		0&0&0&0&&&&&&&&&&&&\\\hline
		&&&&0&0&0&0&&&&&&&&\\
		0&0&0&0&0&0&0&0&0&0&$*$&0&0&0&0&0\\
		&&&&0&0&0&0&&&&&&&&\\
		&&&&0&0&0&0&&&&&&&&\\\hline
		&&&&&&&&$*$&0&0&0&&&&\\
		&&&&&&&&0&$*$&0&0&&&&\\
		$*$&0&0&0&0&$*$&0&0&0&0&$*$&0&0&0&0&$*$\\
		&&&&&&&&0&0&0&$*$&&&&\\\hline
		&&&&&&&&&&&&0&0&0&0\\
		&&&&&&&&&&&&0&0&0&0\\
		&&&&&&&&&&&&0&0&0&0\\
		0&0&0&0&0&0&0&0&0&0&0&0&0&0&0&0\\\hline
	\end{tabular}\vspace{.7\baselineskip}\caption{$\nabla_{\nabla_{E_k}E_l}\Ric(E_i, E_j)$}\label{0and*2}\end{minipage}
	\begin{minipage}{.49\linewidth}\centering
	\begin{tabular}{|cccc|cccc|cccc|cccc|}
		\hline
		0&0&0&0&0&0&0&0&0&0&0&0&0&0&0&0\\
		0&$*$&0&0&&&&&&&&&&&&\\
		0&0&$*$&0&&&&&&&&&&&&\\
		0&0&0&0&&&&&&&&&&&&\\\hline
		&&&&$*$&0&0&0&&&&&&&&\\
		0&0&0&0&0&0&0&0&0&0&0&0&0&0&0&0\\
		&&&&0&0&$*$&0&&&&&&&&\\
		&&&&0&0&0&0&&&&&&&&\\\hline
		&&&&&&&&0&0&0&0&&&&\\
		&&&&&&&&0&0&0&0&&&&\\
		0&0&0&0&0&0&0&0&0&0&$*$&0&0&0&0&$*$\\
		&&&&&&&&0&0&0&$*$&&&&\\\hline
		&&&&&&&&&&&&0&0&0&0\\
		&&&&&&&&&&&&0&0&0&0\\
		&&&&&&&&&&&&0&0&0&0\\
		0&0&0&0&0&0&0&0&0&0&0&0&0&0&0&0\\\hline
	\end{tabular}\vspace{.7\baselineskip}\caption{$\nabla_{E_l}\Ric\left( \nabla_{E_k}E_i, E_j\right)$}\label{0and*3}\end{minipage}
	\begin{minipage}{.49\linewidth}\centering
	\begin{tabular}{|cccc|cccc|cccc|cccc|}
		\hline
		0&0&0&0&0&$*$&0&0&0&0&0&0&0&0&0&0\\
		0&$*$&0&0&&&&&&&&&&&&\\
		0&0&$*$&0&&&&&&&&&&&&\\
		0&0&0&0&&&&&&&&&&&&\\\hline
		&&&&$*$&0&0&0&&&&&&&&\\
		$*$&0&0&0&0&0&0&0&0&0&0&0&0&0&0&0\\
		&&&&0&0&$*$&0&&&&&&&&\\
		&&&&0&0&0&0&&&&&&&&\\\hline
		&&&&&&&&0&0&0&0&&&&\\
		&&&&&&&&0&0&0&0&&&&\\
		$*$&0&0&0&0&$*$&0&0&0&0&$*$&0&0&0&0&$*$\\
		&&&&&&&&0&0&0&$*$&&&&\\\hline
		&&&&&&&&&&&&0&0&0&0\\
		&&&&&&&&&&&&0&0&0&0\\
		&&&&&&&&&&&&0&0&0&0\\
		0&0&0&0&0&0&0&0&0&0&0&0&0&0&0&0\\\hline
	\end{tabular}\vspace{.7\baselineskip}\caption{$\nabla_{E_l}\Ric\left( E_i, \nabla_{E_k}E_j\right)$}\label{0and*4}\end{minipage}
\end{center}\end{table}

We are now in a position to compute the Bach tensor of $g=g(f)$ using Derdzi\'nski formula (\cite[Equation (24)]{Der})
\begin{equation}\label{Derdzinski}
\begin{split}
B_{ij}
&=\cd^p\cd_j\Ric_{pi}-\half\cd^p\cd_p\Ric_{ij}-\frac{1}{3}\Hess_{ij}R-\frac{1}{12}(\Delta R)g_{ij}\\
&\quad+\frac{1}{3}R\cdot\Ric_{ij}-\Ric_{i}^p\Ric_{pj}+\frac{1}{12}\left( 3\lvert \Ric\rvert^2-R^2\right)g_{ij}.  
\end{split}
\end{equation}

\renewcommand{\proofname}{Proof of equations \eqref{Bach1}.}
\begin{proof}
First of all, since it follows from the previous calculations that the Bach tensor is diagonalized, 
we have only to compute its diagonal components.  
The first four terms in formula \eqref{Derdzinski} involving fourth derivatives are written respectively in terms of $f$ using equations 
\eqref{trace11}-\eqref{trace13}, \eqref{trace21}-\eqref{trace23}, \eqref{HessPhi}, and \eqref{LaplacePhi}.  
The last three terms are written in terms of $f$ using equations \eqref{scalar}, \eqref{Ric}, \eqref{RicSquared}, and \eqref{scalarSquared}.  
Hence, we have all of its diagonal components
\begin{equation}\label{Bach}
\renewcommand{\arraystretch}{2}{\setlength\arraycolsep{1pt}
\begin{array}{rrrrrrrrrrr}
6B_1=6B_2&=&-\rho_4&+\rho_1\rho_3&+2\rho_2^2&-1\rho_1^2\rho_2&+20f^2\rho_2&+20f^2\rho_1^2&-48f^4&+64f^2&-11, \\
6B_3&=&2\rho_4&-4\rho_1\rho_3&-3\rho_2^2&+4\rho_1^2\rho_2&-40f^2\rho_2&-20f^2\rho_1^2&+80f^4&-96f^2&+11, \\
6B_4&=&&2\rho_1\rho_3&-\rho_2^2&-2\rho_1^2\rho_2&-20f^2\rho_1^2&&+16f^4&-32f^2&+11.  
\end{array}}
\end{equation}
Rewriting these in terms of the scalar curvature $R$, we obtain equations \eqref{Bach1}.  
\end{proof}
\renewcommand{\proofname}{Proof}

We close this section with listing a couple of observations to support our computational results.  
\begin{enumerate}
\item When we impose boundary conditions \eqref{BC} on $f$, 
	then Chern-Gauss-Bonnet formula on $\Sigma_m$ is reduced to the fundamental theorem of calculus.  
\item The diagonalized tensor $B$ defined as in \eqref{Bach1} is indeed trace-free and divergence-free.  
	That is, $B_1+B_2+B_3+B_4=0$ and $\frac{d}{dt}B_4-\frac{f'}{f}(B_3-B_4)=0$.  
\item For each fixed $m$, consider the functional $2\pi^2/m\int_{-T}^Tf\lvert W\rvert^2dt$ 
	with domain all the functions $f$ satisfying boundary conditions \eqref{BC}.  
	Then, its Euler-Lagrange equation is precisely $B_3=0$.  
\end{enumerate}
Note that the functional $2\pi^2/m\int_{-T}^Tf\lvert W\rvert^2dt$ in (3)
is equivalent to the Weyl functional $\int_{\Sigma_m}\lvert W\rvert^2\dVol$
restricted to the metrics on $\Sigma_m$ with $\U(2)/\Gamma_m$-symmetry (cf. Section \ref{FurtherProperties}).  
Therefore, the last observation says that the metric $g=g(f)$ defines a critical metric on $\Sigma_m$ with respect to the restricted Weyl functional 
if and only if $B_3=0$.  
The observations (1), (2), and (3) seem to be more than coincidence.  

\section{Metrics on $\Sigma_m$ with constant scalar curvature}\label{CscMetrics}
\subsection{Proofs of Theorems \ref{cscExistence} and \ref{nonExistence}}\label{MainProofs}
Proposition \ref{identify} and equation \eqref{scalar} 
reduce the construction of constant scalar curvature metrics on $\Sigma_m$ 
to solving the ordinary differential equation
\begin{equation}\label{DuffingEq.}
f''=-f^3-\frac{R-8}{2}
\end{equation}
under boundary conditions \eqref{BC} and the positivity assumption $f(t)>0$ for $t\in (-T, T)$.  

The following lemma states that our free-boundary value problem has unique solutions.   
\begin{lemma}\label{Duffing}
Let $m$ be a positive integer and $\beta$ a real number.  
Then, there exists a unique real number $T>0$ and a unique $C^{\infty}$ function $f: [-T, T]\to\real$ 
satisfying the following conditions.  
\begin{itemize}\renewcommand{\labelitemi}{$\bullet$}
\item $f$ solves the ordinary differential equation $f''=-f^3+\beta f$ on $[-T, T]$.  
\item $f$ satisfies the boundary conditions $f(\pm T)=0, f'(\pm T)=\mp m$.  
\item $f$ is strictly positive on $(-T, T)$.  
\end{itemize}
Moreover, this function $f$ has the following additional properties.  
\begin{itemize}\renewcommand{\labelitemi}{$\bullet$}
\item $f$ satisfies the boundary conditions $f^{(2l)}(\pm T)=0$ for $l=1, 2, \dots$ .  
\item $f$ is an even function.  
\item $f$ is strictly increasing on $[-T, 0]$ and strictly decreasing on $[0, T]$.    
\end{itemize}
\end{lemma}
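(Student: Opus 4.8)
The plan is to treat the autonomous equation $f''=-f^3+\beta f$ as a conservative system and let its first integral do all the bookkeeping. Multiplying by $f'$ and integrating produces the conserved energy $\tfrac12(f')^2+U(f)$ with potential $U(s)=\tfrac14 s^4-\tfrac{\beta}{2}s^2$; equivalently $(f')^2+\tfrac12 f^4-\beta f^2$ is constant along solutions. Evaluating this constant at either boundary point, where $f=0$ and $(f')^2=m^2$, forces every admissible solution to have energy $\tfrac12 m^2$. This single identity is the engine of the whole argument, so I would record it at the outset.

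Next I would locate the turning point. An admissible $f$ is positive on $(-T,T)$ and vanishes at the ends, so it attains an interior maximum at which $f'=0$; the energy identity then reads $\tfrac12 a^4-\beta a^2=m^2$, whose unique positive root is $a^2=\beta+\sqrt{\beta^2+2m^2}$. For existence I would solve the initial value problem with data $f(0)=a,\ f'(0)=0$. Since the equation is unchanged under $t\mapsto -t$, uniqueness of solutions makes $f$ even automatically. Because $f''(0)=-a^3+\beta a<0$, the solution begins to decrease, and I would show it strictly decreases until it first meets $0$: were it instead to converge to some $\ell\in[0,a)$ as $t$ grows, then $f'\to0$ would force $U(\ell)=\tfrac12 m^2$, contradicting the fact that $U<\tfrac12 m^2$ on $[0,a)$ (which one checks from the monotonicity of $U$ on $[0,\infty)$, using that $a$ is a simple root of $U=\tfrac12 m^2$). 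Hence $f$ reaches $0$ at a finite time $T$, the energy identity yields $f'(T)=-m$, and evenness supplies $f(-T)=0$, $f'(-T)=m$ together with positivity on $(-T,T)$.

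Uniqueness I would extract from the same principles. Any admissible pair $(T,f)$ has energy $\tfrac12 m^2$, so every interior critical point carries the value $a$ and $f\le a$ throughout; a second critical point would force an interior local minimum, again with critical value $a$, which is impossible, so the maximum is unique. Separating variables on each monotone piece writes both half-periods as the single quadrature $\int_0^a\bigl(m^2+\beta s^2-\tfrac12 s^4\bigr)^{-1/2}\,ds$, which therefore determines $T$ and pins the maximum at $t=0$; the function $f$ is then forced by ODE uniqueness from its data at the origin. Strict monotonicity on each half is immediate since $f'\neq0$ on $(0,T)$, and $C^\infty$ regularity up to the endpoints follows from the smooth (indeed analytic) dependence of ODE solutions. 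Finally, the higher boundary conditions $f^{(2l)}(\pm T)=0$ I would obtain not by induction but by a reflection: $\tilde f(t)=-f(2T-t)$ solves the same equation and matches $f$ in value and first derivative at $t=T$, so uniqueness gives $f(T+s)=-f(T-s)$; thus $f$ is odd about $(T,0)$, all its even-order derivatives vanish there, and the point $-T$ follows by evenness.

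The step I expect to be the real obstacle is the claim that the descending solution actually reaches $0$ in finite time and does so transversally, i.e.\ controlling the flow near the turning point $a$. Everything hinges on $a$ being a \emph{simple} zero of $U-\tfrac12 m^2$ and on the sign and monotonicity behaviour of $U$ on $[0,\infty)$, which takes a different form according to whether $\beta\le0$ or $\beta>0$; I would organise the potential analysis so that both cases are covered uniformly. Once this is in place, the remainder is a routine deployment of the conservation law and standard ODE uniqueness.
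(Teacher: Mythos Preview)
Your proposal is correct and follows essentially the same route as the paper: derive the first integral $2(f')^2=-f^4+2\beta f^2+2m^2$ and then carry out the phase-plane analysis that the paper only gestures at with the phrase ``similar to the arguments for defining trigonometric functions and Jacobian elliptic functions.'' Your reflection trick $\tilde f(t)=-f(2T-t)$ for the vanishing of the even-order derivatives at $\pm T$ is a clean way to make explicit something the paper leaves entirely to the reader.
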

\begin{proof}
A slightly formal computation
\[
\frac{df}{df'}=\frac{df}{dt}\bigg/\frac{df'}{dt}=\frac{f'}{-f^3+\beta f}
\]
leads to the integral 
\begin{equation}\label{FI}
2(f')^2=-f^4+2\beta f^2+2m^2
\end{equation} 
of this boundary value problem.  
The rest of the proof is similar to the arguments for defining trigonometric functions and Jacobian elliptic functions 
through ordinary differential equations.  
\end{proof}

We now prove our main results.  
\renewcommand{\proofname}{Proofs of Theorems \ref{cscExistence} and \ref{nonExistence}. }
\begin{proof}
The existence part of Lemma \ref{Duffing} yields Theorem \ref{cscExistence}.  
More precisely, for each integer $m\ge 1$ and each real number $R$, 
we define a constant scalar curvature metric $g_m(R)$ on $\Sigma_m$ as follows.  
Firstly, take a function $f$ to be the positive solution of our boundary value problem \eqref{BC} and \eqref{DuffingEq.}.  
Secondly, define the metric $g=g(f)$ on $S^3\times (-T, T)$ through the matrix \eqref{gf}.  
Thirdly, define a Riemannian metric on the open dense submanifold of $\Sigma_m$ 
so that each of the covering map $S^3\times(-T, T)\to\left(S^3/\Gamma_m\right)\times(-T, T)$ 
and embedding $\left(S^3/\Gamma_m\right)\times(-T, T)\to\Sigma_m$ be a local isometry (cf. \S \ref{odsub}).  
Arguments in Section \ref{Preliminaries} indicate that the metric so defined 
extends smoothly to a metric on the whole $\Sigma_m$.  
This smooth metric is our $g_m(R)$, 
which has constant scalar curvature $R$ on $\Sigma_m$
since its scalar curvature is identically equal to the constant $R$ on the open dense submanifold.  

We prove Theorem \ref{nonExistence} as follows.  
Proposition \ref{identify} and the uniqueness part of Lemma \ref{Duffing} tell us that 
the metrics $g_m(R)$ of Theorem \ref{cscExistence} are the only constant scalar curvature metrics on $\Sigma_m$ satisfying conditions (I) and (II).  
On the other hand, 
we observe that the scalar curvature $R$ of $g=g(f)$ cannot be constant 
in order for a function $f$ to solve simultaneous equations \eqref{Bach1} with $B_1=B_2=B_3=B_4=0$.  
The metrics $g_m(R)$'s are therefore not Bach flat.  
Since critical metrics with respect to linear or quadratic curvature functionals have constant scalar curvature 
unless it is Bach flat (cf. \cite[4.H]{Bes3}, \cite{GV}), 
Theorem \ref{nonExistence} follows.  
\end{proof}
\renewcommand{\proofname}{Proof}

\subsection{Further properties of $g_m(R)$}\label{FurtherProperties}
We look at the metrics $g_m(R)$ with constant scalar curvature $R$ in more detail.  
One of the ingredients required for their analysis is the following.  
\begin{lemma}\label{DefiniteIntegrals}
Let $f$ be the function of Lemma \ref{Duffing}.  Then, we have
\begin{align}
T&=\frac{1}{\sqrt[4]{2m^2+\beta^2}}K(k), \\\label{INTf1}
\int_{-T}^Tf(t)dt&=2\sqrt{2}\Arcsin(k), \\\label{INTf3}
\int_{-T}^Tf^3(t)dt&=2\beta\sqrt{2}\Arcsin(k)+2m, \\\label{INTf5}
\int_{-T}^Tf^5(t)dt&=\left(2m^2+3\beta^2\right)\sqrt{2}\Arcsin(k)+3m\beta.   
\end{align}
Here, the complete elliptic integral $K(k)$ of the first kind and its modulus $k>0$ are defined to be
\begin{align}\label{Kk}
K(k)=\int_0^1\frac{dx}{\sqrt{1-x^2}\sqrt{1-k^2x^2}}, &&
k^2=\half\left(1+\frac{\beta}{\sqrt{2m^2+\beta^2}}\right).  
\end{align}
\end{lemma}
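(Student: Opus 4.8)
The plan is to turn each of the four definite integrals into a single integral over the $f$-variable by combining the first integral \eqref{FI} with the evenness and monotonicity of $f$ from Lemma \ref{Duffing}. On $[0,T]$ the function $f$ decreases strictly from its maximum $f_{\max}=\sqrt{\beta+\sqrt{2m^2+\beta^2}}$ (the positive root of $-f^4+2\beta f^2+2m^2=0$) down to $0$, so \eqref{FI} gives $f'=-\tfrac{1}{\sqrt2}\sqrt{-f^4+2\beta f^2+2m^2}$ there. Changing variables from $t$ to $f$ and using $\int_{-T}^T=2\int_0^T$ yields the master identity
\begin{equation*}
\int_{-T}^T f^n\,dt=2\sqrt2\int_0^{f_{\max}}\frac{f^n\,df}{\sqrt{-f^4+2\beta f^2+2m^2}},
\end{equation*}
which reduces the lemma to the four cases $n=0,1,3,5$.

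For $n=0$, giving $T$, I would factor $-f^4+2\beta f^2+2m^2=(u_+-f^2)(f^2+|u_-|)$ with $u_\pm=\beta\pm\sqrt{2m^2+\beta^2}$ and substitute $f=\sqrt{u_+}\,\sin\psi$. The radical simplifies to $\sqrt{u_+\sin^2\psi+|u_-|}$; factoring out $u_++|u_-|=2\sqrt{2m^2+\beta^2}$ and identifying $k^2=u_+/(2\sqrt{2m^2+\beta^2})$---exactly the modulus in \eqref{Kk}---the $\sqrt2$ from the master identity cancels, and one obtains $T=(2m^2+\beta^2)^{-1/4}\int_0^{\pi/2}(1-k^2\cos^2\psi)^{-1/2}\,d\psi$, which equals $(2m^2+\beta^2)^{-1/4}K(k)$ after the reflection $\psi\mapsto\tfrac{\pi}{2}-\psi$. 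For $n=1$, the substitution $v=f^2$ and the completion of the square $-v^2+2\beta v+2m^2=(2m^2+\beta^2)-(v-\beta)^2$ make the integral elementary, evaluating to $\sqrt2\bigl(\tfrac{\pi}{2}+\arcsin\tfrac{\beta}{\sqrt{2m^2+\beta^2}}\bigr)$.

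The two higher moments require no further integration; they follow from the ODE and \eqref{FI} by parts. Writing $f^3=\beta f-f''$ and integrating, the $f''$ term contributes only $-[f']_{-T}^T=2m$ through the boundary data $f'(\pm T)=\mp m$, which gives \eqref{INTf3} from \eqref{INTf1}. For $n=5$ I would use \eqref{FI} as $f^4=-2(f')^2+2\beta f^2+2m^2$, multiply by $f$, and eliminate $\int_{-T}^T f(f')^2\,dt$ via $\frac{d}{dt}(f^2f')=2f(f')^2+f^2f''$ together with $f^2f''=-f^5+\beta f^3$; the boundary term $[f^2f']_{-T}^T$ vanishes because $f(\pm T)=0$. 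This produces the reduction $\int_{-T}^T f^5\,dt=\tfrac{3\beta}{2}\int_{-T}^T f^3\,dt+m^2\int_{-T}^T f\,dt$, into which \eqref{INTf1} and \eqref{INTf3} substitute directly to give \eqref{INTf5}.

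The one genuinely delicate point is the normalization of the inverse-trigonometric term: I must verify that the elementary outcome $\tfrac{\pi}{2}+\arcsin\tfrac{\beta}{\sqrt{2m^2+\beta^2}}$ of the $n=1$ integral equals $2\Arcsin(k)$. This is exactly where the choice $k^2=\half\bigl(1+\beta/\sqrt{2m^2+\beta^2}\bigr)$ in \eqref{Kk} is engineered: setting $\alpha=\Arcsin(k)$ one has $\cos2\alpha=1-2k^2=-\beta/\sqrt{2m^2+\beta^2}$, hence $\sin\bigl(2\alpha-\tfrac{\pi}{2}\bigr)=\beta/\sqrt{2m^2+\beta^2}$, and since $\alpha\in[0,\tfrac{\pi}{2}]$ the two expressions coincide. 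With this identity in hand, \eqref{INTf1} follows, the value of $T$ is already in the required form, and \eqref{INTf3} and \eqref{INTf5} close the argument.
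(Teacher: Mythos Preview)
Your argument is correct and is precisely what the paper's one-line proof (``Direct calculations using integral \eqref{FI}.  Additional properties of $f$ in Lemma \ref{Duffing} help.'') invites the reader to carry out: you use the evenness and monotonicity of $f$ together with \eqref{FI} to convert the $t$-integrals into $f$-integrals, handle $n=0,1$ by the standard elliptic and quadratic substitutions, and then reduce $n=3,5$ to the lower moments via the ODE and integration by parts with the boundary data \eqref{BC}. The verification that $\tfrac{\pi}{2}+\arcsin(\beta/\sqrt{2m^2+\beta^2})=2\Arcsin(k)$ is exactly the point where the definition of $k$ in \eqref{Kk} is used, and your double-angle check is the cleanest way to see it.
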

\begin{proof}
Direct calculations using integral \eqref{FI}.  
Additional properties of $f$ in Lemma \ref{Duffing} help.  
\end{proof}

In the sequel, $\beta$ denotes the constant $-(R-8)/2$, 
and we promise that $k>0$ refers to the constant determined by this $\beta$ as in the second equation of \eqref{Kk}.  
\begin{proposition}[Behavior of linear curvature functionals]\label{linear}
The Yamabe functional $Y$, or equivalently the Einstein-Hilbert functional $E$, 
attains the following value
\begin{equation}\label{Yamabe}
Y\left(g_m(R)\right)=R\sqrt{\Vol(g_m(R))}
=2\sqrt[4]{2}\pi R\sqrt{\frac{\Arcsin(k)}{m}}
\end{equation}
at $g_m(R)$.  
The metric $g_m(R)$ is critical with respect to $Y$ by its definition, but not critical with respect to $E$.  

There exists a real number $\epsilon>0$ depending on $m$ 
so that, if $R$ is less than $\epsilon$, 
then $g_m(R)$ is a unique Yamabe minimizer in its conformal class up to homothety.  
On the other hand, if $R$ is greater than 24 \textup{(}regardless of $m$\textup{)}, 
then $g_m(R)$ is not stable with respect to $Y$ and hence not a Yamabe minimizer.  
\end{proposition}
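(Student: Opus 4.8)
The value \eqref{Yamabe} is a bookkeeping computation. Since $g_m(R)$ has constant scalar curvature $R$, its total scalar curvature is $R\Vol(g_m(R))$, so $E(g_m(R))=R\sqrt{\Vol(g_m(R))}$, and this agrees with $Y(g_m(R))$ because $g_m(R)$ is a critical point of the Yamabe functional. It remains to compute the volume. I would work on the open dense submanifold, using that the degree-$m$ covering $S^3\times(-T,T)\to(S^3/\Gamma_m)\times(-T,T)$ is a local isometry, so that $\Vol(g_m(R))=\frac{1}{m}\Vol(S^3\times(-T,T),g(f))$. In the orthonormal coframe dual to \eqref{omf} the volume form of $g(f)$ in \eqref{gf} is $f(t)\,\alpha^X\wedge\alpha^Y\wedge\alpha^V\wedge dt$, where $\int_{S^3}\alpha^X\wedge\alpha^Y\wedge\alpha^V=2\pi^2$ is the volume of the round unit $3$-sphere (a Hopf fibre of length $2\pi$ over a base of area $\pi$, cf.\ condition (I)). Hence $\Vol(g_m(R))=\frac{2\pi^2}{m}\int_{-T}^{T}f\,dt$, and inserting $\int_{-T}^{T}f\,dt=2\sqrt2\,\Arcsin(k)$ from Lemma \ref{DefiniteIntegrals} gives $\Vol(g_m(R))=4\sqrt2\,\pi^2\Arcsin(k)/m$, which is \eqref{Yamabe}. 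Criticality for $Y$ holds by definition; criticality for $E$ fails because it would force $g_m(R)$ to be Einstein, whereas \eqref{Ric} yields $\Ric_3-\Ric_4=2f^2>0$ on $(-T,T)$.

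For the instability I would use the second variation of $Y$ along the conformal class. Writing conformal metrics as $u^2g_m(R)$ with $u=1+\varepsilon\phi$ and $\int_{\Sigma_m}\phi\,\dVol=0$, expansion of $Y(u^2g)=\int(6\lvert\nabla u\rvert^2+Ru^2)\,\dVol\big/(\int u^4\,\dVol)^{1/2}$ shows the quadratic term to be a positive multiple of $6\int_{\Sigma_m}\lvert\nabla\phi\rvert^2\,\dVol-2R\int_{\Sigma_m}\phi^2\,\dVol$; equivalently $g_m(R)$ is a local minimum in its conformal class precisely when $\lambda_1(-\Delta)\ge R/3$, a scale-invariant condition. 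It therefore suffices to exhibit one destabilizing direction. I would take $\phi=\psi\circ\pi_m$ with $\psi$ a first eigenfunction of the Fubini--Study $\CP^1$, whose first nonzero Laplace eigenvalue is $8$ (the round sphere of area $\pi$). By Proposition \ref{consequences}, $\pi_m$ is a Riemannian submersion whose fibres are mutually isometric, hence of common volume; the submersion identity $\lvert\nabla\phi\rvert^2=(\lvert\nabla\psi\rvert^2)\circ\pi_m$ together with integration along fibres gives $\int_{\Sigma_m}\lvert\nabla\phi\rvert^2\,\dVol\big/\int_{\Sigma_m}\phi^2\,\dVol=8$ and $\int_{\Sigma_m}\phi\,\dVol=0$. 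The second variation in this direction is then a positive multiple of $48-2R$, negative exactly for $R>24$. So $g_m(R)$ is not even a local minimizer, hence not a Yamabe minimizer, once $R>24$.

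The minimization statement I would split by the sign of $R$. For $R\le 0$ the metric $g_m(R)$ has constant scalar curvature $\le 0$, the conformal Yamabe invariant satisfies $\mathcal{Y}([g_m(R)])\le 0$, and the maximum principle applied to the Yamabe equation forces the only positive solution to be constant (uniquely for $R<0$, up to scaling for $R=0$); thus $g_m(R)$ is the Yamabe minimizer, unique up to homothety. The genuine case is $0<R<\epsilon$. Here the family $g_m(R)$ depends smoothly on $R$ through the parameter $\beta=(8-R)/2$ of Lemma \ref{Duffing} (including the endpoint $T$), so $\lambda_1(-\Delta_{g_m(R)})\to\lambda_1(-\Delta_{g_m(0)})>0$ as $R\to0^{+}$, whence $\lambda_1>R/3$ for all small $R$: the metric $g_m(R)$ is strictly stable. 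Moreover \eqref{Yamabe} gives $\mathcal{Y}([g_m(R)])\le Y(g_m(R))\to0$ as $R\to0$, so for small $R$ one has $\mathcal{Y}([g_m(R)])<\mathcal{Y}(S^4)$ (the Yamabe invariant of the round $4$-sphere) and the Aubin--Schoen resolution of the Yamabe problem produces a smooth minimizer, necessarily of constant scalar curvature.

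The main obstacle is the remaining uniqueness in the positive case: strict stability only makes $g_m(R)$ an isolated local minimizer and does not by itself exclude a second constant scalar curvature metric elsewhere in the class. I would close this gap by a continuity/degree argument anchored at the scalar-flat metric $g_m(0)$: once $\mathcal{Y}([g_m(R)])<\mathcal{Y}(S^4)$ the set of unit-volume constant scalar curvature metrics in $[g_m(R)]$ is compact, and strict stability makes the constant a nondegenerate solution of the Yamabe equation, so for $R$ near $0$ the constant is the unique solution up to homothety. Taking $\epsilon$ to be the first value of $R$ at which strict stability, the subcritical bound $\mathcal{Y}([g_m(R)])<\mathcal{Y}(S^4)$, or the a priori compactness estimate breaks down then produces the asserted $m$-dependent threshold.
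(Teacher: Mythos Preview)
Your computation of $Y(g_m(R))$, the non-Einstein check via $\Ric_3-\Ric_4=2f^2>0$, and the instability argument for $R>24$ all agree with the paper. The paper quotes the submersion inequality $\lambda_1\le\check\lambda_1=8$ from \cite{BB} rather than exhibiting the test function, but your pulled-back eigenfunction $\phi=\psi\circ\pi_m$ is exactly how that inequality is proved, so there is no genuine difference here.

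The divergence is in the Yamabe-minimizer clause. The paper does not argue from scratch: it invokes a modification of Theorem~5.1 of B\"ohm--Wang--Ziller \cite{BWZ} for the existence of the threshold $\epsilon$, and de~Lima--Piccione--Zedda \cite{LPZ} for uniqueness of the constant-scalar-curvature representative in the conformal class. Your route is more self-contained---classical uniqueness for $R\le 0$, then a continuity argument from $R=0$ combining strict stability (hence nondegeneracy of $u\equiv 1$ for the Yamabe equation) with compactness of the solution set---and is in fact the mechanism behind \cite{LPZ}. One point needs correcting: the subcritical condition $\mathcal{Y}([g_m(R)])<\mathcal{Y}(S^4)$ gives \emph{existence} of a minimizer via Aubin--Schoen, but it does not by itself give compactness of the full set of constant-scalar-curvature representatives. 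In dimension~$4$ that compactness is a separate theorem (Li--Zhu, or Schoen's earlier work), valid because $\Sigma_m$ is not conformally diffeomorphic to $S^4$; once you cite that, your implicit-function/continuity argument from $R=0$ goes through. The payoff of your approach is that it makes the mechanism explicit and handles $R\le 0$ cleanly; the paper's approach is shorter on the page but hides the work in the references.
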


\begin{proof}
Let $\tilde{g}$ be the constant curvature metric on $S^3$ of radius $1$ 
and $\Omega$ its volume form.  
We note that the volume form of $g=g(f)$ is equal to $f(t)\Omega\wedge dt$.  
Thus, the integral of a $\U(2)$-invariant function $\phi=\phi(t)$ on $S^3\times(-T, T)$ is
\begin{equation}\label{integral}
\int_{S^3\times(-T, T)}\phi\dVol_{g(f)}=2\pi^2\int_{-T}^Tf(t)\phi(t)dt, 
\end{equation}
where $2\pi^2$ is the volume of $\tilde{g}$.  
In particular, the volume of $g(f)$ is equal to $2\pi^2\int_{-T}^Tf(t)dt$.  
Therefore, using \eqref{INTf1}, we obtain 
\[
\Vol(g_m(R))=\frac{1}{m}\cdot 2\pi^2\int_{-T}^Tf(t)dt=4\sqrt{2}\pi^2\frac{\Arcsin(k)}{m}, 
\]
which proves \eqref{Yamabe}.  
We should be aware of the factor $m$ appearing through the covering map.  
The critical points of $Y$ and $E$ are constant scalar curvature metrics and Einstein metrics, respectively, 
and we observe from \eqref{Ric} that $g_m(R)$ is not Einstein.  

A slight modification to Theorem 5.1 of B\"ohm, Wang, and Ziller \cite{BWZ} ensures the existence of such an $\epsilon>0$.  
We remark that, according to de Lima, Piccione, and Zedda \cite{LPZ}, 
$g_m(R)$ is a unique constant scalar curvature metric in its conformal class up to homothety.  

We recall that a constant scalar curvature metric $g$ on a closed $4$-dimensional manifold is stable with respect to $Y$ if and only if 
its scalar curvature $R$ and first eigenvalue $\lambda_1>0$ of Laplacian satisfies 
$\lambda_1\ge R/3$ (cf. \cite{KobO}).  
Thus, for the last assertion, we have only to estimate $\lambda_1$ from above.  
For the record, we also derive a lower bound of the first eigenvalue in what follows .  

Since $\pi_m: \left(\Sigma_m, g_m(R)\right) \to\left(\CP^1, \check{g}\right)$ 
is a Riemannian submersion with totally geodesic fibers onto the Fubini-Study metric (Proposition \ref{consequences}), 
the first eigenvalues 
\begin{align*}
\lambda_1=\lambda_1\left(\Sigma_m, g_m(R)\right),&& 
\check{\lambda}_1=\check{\lambda}_1\left(\CP^1, \check{g}\right)=8,&&
\hat{\lambda}_1=\hat{\lambda}_1\left(S^2, f^2(t)d\theta^2+dt^2\right)
\end{align*}
of the total space, base space, and fiber satisfy the following inequalities
\begin{align}\label{eigen1}
\min\{8, \hat{\lambda}_1\}\le\lambda_1, &&\lambda_1\le 8
\end{align}
(cf. \cite{Bor} for the first and \cite{BB} for the second).  
The second inequality of \eqref{eigen1} completes the proof of Proposition \ref{linear}.  
On the other hand, Cheeger's isoperimetric inequality \cite{Che} and Hersch's inequality \cite{Hers} yield 
$h^2/4\le\hat{\lambda}_1\le 8\pi/a$, 
where $h$ and $a$ are respectively the isoperimetric constant and area of the $S^1$-invariant metric $f^2(t)d\theta^2+dt^2$.  
Furthermore, results of Ritor\'e \cite{Rit} show, 
via Yau's argument \cite[p. 489]{Yau}, 
that the value $h$ is attained by a domain of area $a/2$ whose boundary is a \textit{nodoid} of length $4T$.  
We note that the differential equation \eqref{DuffingEq.} and monotonicity of $f$ in Lemma \ref{Duffing} 
imply that the Gaussian curvature $-f''/f$ of $f^2(t)d\theta^2+dt^2$ is monotone.  
Hence, $h$ is equal to $8T/a$.  
Since $T$ and $a=2\pi\int_{-T}^Tf(t)dt$ can be written in terms of $m$ and $R$ (Lemma \ref{DefiniteIntegrals}), 
we obtain
\begin{equation}\label{eigen2}
\frac{1}{2\pi^2\sqrt{2m^2+\beta^2}}\frac{K^2(k)}{\Arcsin^2(k)}\le\hat{\lambda}_1\le\frac{\sqrt{2}}{\Arcsin(k)}.  
\end{equation}
Stability of some metrics $g_m(R)$ follows from inequalities \eqref{eigen1} and \eqref{eigen2}.  
\end{proof}

\begin{proposition}[Behavior of quadratic curvature functionals]
The $\mcal{B}_t$-functional attains the following value
\begin{equation*}
\mcal{B}_t\left(g_m(R)\right)
=\frac{2\pi^2}{m}\left(72m^2+\frac{6t+59}{3}R^2-272R+960\right)\sqrt{2}\Arcsin(k)
-4\pi^2(19R-120)
\end{equation*}
at $g_m(R)$.  
The metric $g_m(R)$ is not critical with respect to $\mcal{B}_t$-functional for each $m\ge 1$, each $R\in\real$, and each $t\in\real$.  
\end{proposition}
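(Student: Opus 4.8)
The plan is to establish the two claims in turn: first the closed-form value of $\mcal{B}_t(g_m(R))$, then the failure of its Euler--Lagrange equation at $g_m(R)$.

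For the value I would split $\mcal{B}_t=\mcal{W}+t\mcal{S}$. Since $g_m(R)$ has constant scalar curvature $R$, the term $\mcal{S}(g_m(R))=R^2\Vol(g_m(R))$ follows at once from the volume computation behind \eqref{Yamabe}, namely $t\mcal{S}(g_m(R))=\tfrac{4\sqrt{2}\pi^2}{m}\,tR^2\Arcsin(k)$. For the Weyl term I would use the integration formula \eqref{integral} together with the covering factor $1/m$ to get $\mcal{W}(g_m(R))=\tfrac{2\pi^2}{m}\int_{-T}^T f\lvert W\rvert^2\,dt$, insert \eqref{WeylSquared1}, and eliminate the first-derivative contribution through the first integral \eqref{FI}, which rewrites $144f(f')^2$ as $-72f^5+144\beta f^3+144m^2 f$. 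Collecting powers gives the integrand $3f\lvert W\rvert^2=(R^2+144m^2)f+(144\beta-12R)f^3-36f^5$, so the three definite integrals \eqref{INTf1}, \eqref{INTf3}, \eqref{INTf5} of Lemma \ref{DefiniteIntegrals} evaluate everything. Substituting $\beta=(8-R)/2$ and merging the $R^2$-terms via $2tR^2+\tfrac{59}{3}R^2=\tfrac{6t+59}{3}R^2$ then yields the asserted formula; this part is a direct substitution with no conceptual difficulty.

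For non-criticality I would work with the Euler--Lagrange tensor $\cd\mcal{B}_t=\cd\mcal{W}+t\,\cd\mcal{S}$. In dimension four $\cd\mcal{W}$ is a fixed nonzero multiple of the Bach tensor $B$, while $\cd\mcal{S}$ at any metric of constant scalar curvature loses its $\Hess R$ and $\Delta R$ terms and reduces to the nonzero multiple $-2R\,\tsRic$ of the traceless Ricci tensor. Hence at the constant-scalar-curvature metric $g_m(R)$ the equation $\cd\mcal{B}_t=0$ is equivalent to a single proportionality $B=\lambda\,\tsRic$ with $\lambda\in\real$ constant (a multiple of $tR$). Both $B$ and $\tsRic$ are diagonal in the frame $\{E_i\}$ with $B_1=B_2$, $\tsRic_1=\tsRic_2$, and each is trace-free, so such a $\lambda$ exists if and only if $B_1\,\tsRic_3=B_3\,\tsRic_1$ holds identically on $(-T,T)$; note this also excludes $\lambda=0$, so the degenerate case $tR=0$ requires no separate argument.

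It then remains to refute this identity. Setting $u=f^2$ and using the differential equation of Lemma \ref{Duffing} in the form $f''/f=\beta-u$ together with \eqref{FI} to eliminate every derivative of $f$, both tensors become polynomials in $u$: from \eqref{Ric} one obtains $\tsRic_1=-2u+2+\beta/2$ and $\tsRic_3=3u-\beta/2-2$, while \eqref{Bach} reduces to $6B_1=-81u^2+(44\beta+64)u+\beta^2+24m^2-11$ and $6B_3=135u^2-(66\beta+96)u-\beta^2-24m^2+11$. The polynomial $(6B_1)\tsRic_3-(6B_3)\tsRic_1$ has leading term $27u^3$, hence is not the zero polynomial; as $u=f^2$ sweeps the interval $(0,f(0)^2]$ while $t$ varies, the identity $B_1\tsRic_3=B_3\tsRic_1$ must fail at some point, no constant $\lambda$ can work, and $g_m(R)$ is not critical for $\mcal{B}_t$ for any choice of $m$, $R$, $t$. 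The main obstacle is precisely this last computation: one must substitute \eqref{FI} and the Duffing relation into the six-term expressions \eqref{Bach} without error, the decisive and reassuring feature being that the top coefficient $27$ is a universal nonzero constant, independent of $m$, $R$, and $t$.
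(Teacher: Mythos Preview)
Your proposal is correct and follows essentially the same route as the paper. For the value of $\mcal{B}_t(g_m(R))$ you invoke \eqref{WeylSquared1}, \eqref{integral}, the first integral \eqref{FI}, and Lemma \ref{DefiniteIntegrals}, exactly as the paper does (the paper cites \eqref{scalarSquared} in place of \eqref{FI}, but this is immaterial); for non-criticality you reduce to the dichotomy ``Bach-flat or $B=\lambda\,\tsRic$ with $\lambda$ constant'' and rule both out via \eqref{Ric} and \eqref{Bach}, which is precisely the paper's argument. The only difference is that you make the last step fully explicit---substituting $f''/f=\beta-u$ and \eqref{FI} to turn $6B_1$, $6B_3$, $\tsRic_1$, $\tsRic_3$ into polynomials in $u=f^2$ and checking that the $2\times 2$ determinant has nonzero leading term $27u^3$---whereas the paper merely asserts that ``it follows from equations \eqref{Ric} and \eqref{Bach} that the latter situation does not occur''; your version also absorbs the Bach-flat case $\lambda=0$ into the same determinant test rather than treating it separately.
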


\begin{proof}
The first part follows from direct calculations using \eqref{WeylSquared1}, \eqref{scalarSquared}, \eqref{integral}, and Lemma \ref{DefiniteIntegrals}.  
A $B^t$-flat metric, which is by definition a critical point of $\mcal{B}_t$-functional, 
is either Bach flat or 
a constant scalar curvature metric whose Bach tensor $B$ is a constant multiple of its traceless Ricci tensor (cf. \cite{GV}).  
We have already observed that $g_m(R)$ is not Bach flat, 
and it follows from equations \eqref{Ric} and \eqref{Bach} that the latter situation does not occur neither.  
\end{proof}

\section{The Bach-flat equation}\label{BachFlat}
Equations \eqref{Bach} are precisely the system of ordinary differential equations describing Bach-flat metrics satisfying conditions (I) and (II).  
We remark that, 
from the conformal invariant property of Bach tensor and a B\'erard-Bergery's result on cohomogeneity-one Riemannian geometry \cite[\S 7]{Ber}, 
it follows that these equations describe Bach-flat metrics on Hirzebruch surfaces with $4$-dimensional isometry group 
(a priori without any relation to our fixed action $\U(2)/\Gamma_m\lact\Sigma_m$).  

We slightly simplify the system of ODEs.  
From the trace-free and divergence-free conditions $B_1+B_2+B_3+B_4=0$ and $\frac{d}{dt}B_4=\frac{f'}{f}(B_3-B_4)$, 
we observe: 
For a strictly positive $C^{\infty}$ function $f: (-T, T)\to\real$ satisfying boundary conditions \eqref{BC}, 
the following statements (A) and (B) are equivalent.  
(A) $f$ is a solution to the system of ODEs \eqref{Bach}.  
(B) $f$ satisfies $B_4=0$, 
and the regular point set $\{t\in[-T, T]\mid f'(t)\neq 0\}$ of $f$ is open and dense in $[-T, T]$.  
Therefore, the single ordinary differential equation $B_4=0$ becomes of our interest.  
In view of this observation, 
we perform the change of variables $x=f^2$, $y=(f')^2$ to transform the equation $B_4=0$ into 
\begin{align}\label{BachFlatODE}
4yy''-(y')^2=20y-16x^2+32x-11.  
\end{align}
Here, $y'$ and $y''$ represent the first and second derivatives of $y$ with respect to $x$.  

Such mathematicians as B\'erard-Bergery look for generalization of Page metric 
and verify that there exists no Einstein metric 
on higher Hirzebruch surfaces $\Sigma_m$ with $4$-dimensional isometry group ($m\ge 2$).  
See \cite{Ber} and \cite[9.K]{Bes3}.  
The author thinks it interesting to consider the analogous problem for Bach-flat metrics.  
However, equation \eqref{BachFlatODE}, 
which does have movable essential singularities according to \cite[XIV]{Inc}, 
is seemingly not easy to solve\footnote{
	Professor Shimomura has identified that the equation $B_4=0$ has two particular solutions defined respectively by 
	\begin{align*}
	(f')^2=f^4-2f^2+7/12, &&
	(f')^2=4f^4-8f^2+53/12.  
	\end{align*}
	Such functions do not define Bach-flat metrics on any Hirzebruch surface since neither $7/12$ nor $7/53$ is an integer.  
	Compare \eqref{FI}.  }.  

\subsection*{Acknowledgements}
A part of this paper is based on \cite{Oto}, 
written under the supervision of Professor H. Izeki.  
I express my gratitude to Professor O. Kobayashi for his genuine advice and encouragement 
after reading an earlier version of the thesis.  
During his stay in Japan, Professor J. Viaclovsky kindly recommended \cite{Ber} and \cite{Der}, 
thereby reminding me 
that the constant scalar curvature metrics constructed in my thesis on $\Sone$ 
could be generalized to higher Hirzebruch surfaces, 
and that the corresponding $\mcal{B}_t$-flat analogue should be in presence.  
I am also grateful to Professor S. Shimomura for improving my understanding of the Bach-flat equation, 
and to Professor S. Matsuo for his advice on an earlier version of this paper.  
This work was supported by Japan Society for the Promotion of Science under Research Fellowship for Young Scientists.



\end{document}